
\documentclass[12pt]{amsart}
\usepackage{amssymb,amscd,amsmath,amsfonts,epsf,latexsym}
\usepackage{graphicx}
\usepackage{epsfig}

\newtheorem{thm}{Theorem}[section]

\newtheorem{cor}[thm]{Corollary}

\newtheorem{lemma}[thm]{Lemma}

\newtheorem{prop}[thm]{Proposition}

\newtheorem{defn}[thm]{Definition}

\numberwithin{equation}{section}

\newcommand{\R}{{\mathbb{R}}}
\newcommand{\C}{{\mathbb{C}}}

\newcommand{\Z}{{\mathbb{Z}}}

\newcommand{\cG}{{\mathcal{G}}}
\newcommand{\cH}{{\mathcal{H}}}
\newcommand{\cN}{{\mathcal{N}}}
\newcommand{\cR}{{\mathcal{R}}}
\newcommand{\cU}{{\mathcal{U}}}
\newcommand{\cZ}{{\mathcal{Z}}}

\newcommand{\End}{\mathrm{End}}
\newcommand{\Aut}{\mathrm{Aut}}
\newcommand{\Hom}{\mathrm{Hom}}
\newcommand{\SU}{\mathrm{SU}}
\newcommand{\PSU}{\mathrm{PSU}}

\newcommand{\Sp}{\mathrm{Sp}}

\newcommand{\PGL}{\mathrm{PGL}}

\newcommand{\CC}{\mathcal{C}}

\newcommand{\F}{\mathbb{F}}

\newcommand{\Id}{\mathrm{Id}}
\newcommand{\Rep}{\mathrm{Rep}}

\newcommand{\ot}{\otimes}

\begin{document}
\title[Unitary Braid Representations]
{Unitary Braid Representations with Finite Image}

\author{Michael J. Larsen}
\email{mjlarsen@indiana.edu}
\address{Department of Mathematics\\
    Indiana University \\
    Bloomington, IN 47405\\
    U.S.A.}
\thanks{Michael Larsen was partially supported by NSF grant DMS-0354772, Eric Rowell was partially supported by NSA grant H98230-08-1-0020.}

\author{Eric C. Rowell}
\email{rowell@math.tamu.edu}
\address{Department of Mathematics \\
Texas A\&M University \\
College Station, TX 77843 \\
U.S.A.}
\subjclass[2000]{Primary 20F36; Secondary 20C15}
\begin{abstract}
We characterize unitary representations of braid groups $B_n$
of degree linear in $n$ and finite images of such representations of degree
exponential in $n$.
\end{abstract}
\maketitle
\section{Introduction}

In this paper, we prove two loosely connected results about unitary representations of
the braid group $\phi\colon B_n\to U(d)$, when $n$ is sufficiently large and the degree $d$
is not too large compared to $n$.  The original motivation goes back to the work of Jones on
images of Braid groups in Hecke algebra representations $H(q,n)$.  Jones showed
\cite{J1} that when $q=i$, the image of $B_n$ in every
irreducible factor of the Hecke algebra is finite; more explicitly, each such image is
an extension of a symmetric group by a $2$-group.
This is in sharp contrast to the usual behavior
of irreducible factors of Hecke algebra representations, in which the closure of the image of $B_n$
contains all unimodular unitary matrices \cite{FLW}.
Birman and Wajnryb showed \cite{BW} that when $q=e^{2\pi i/6}$, certain factors of
$H(q,n)$ give rise to representations whose images are extensions of symplectic
groups $\Sp(2r,\F_3)$ by $3$-groups, where $n\approx 2r$ (see also \cite{GJ}).
It seems to be known by some experts, though so far as we know it has not appeared in print, that some other factors of $H(e^{2\pi i/6},n)$ give rise to image groups which are extensions of
$\SU(r+1,\F_2)$ by $2$-groups.  Other (extensions of) symplectic groups appear as quotients of the braid group; Wajnryb \cite{W} has found explicit relations exhibiting $\Sp(2r,\F_p)$ as a quotient of $B_{2r+1}$ for all $p$.  We would like to explain in some sense or at least characterize the possibilities for finite images in such representations.  Such a characterization is given in
Theorem~\ref{t:images}.

It appears to be typically the case that the image of $B_n$ in $U(d)$ can be regarded as a linear group, whose rank is comparable to $n$, over a finite field.  We would like to systematically study
all representations of $B_n$ of dimension $O(n)$ over all fields.  Such a study has been initiated
for complex representations of degree $\le n$ by Formanek and his coworkers in \cite{F,FLSV,Sy}.  In Theorem~\ref{t:degrees}, we extend these results to higher multiples of $n$, but only for unitary representations.  For general representations, we have only the very soft result Theorem~\ref{t:rootbound}, which is used to relate $n$ and $r$ in Theorem~\ref{t:images}.

\section{Braid Groups}

In this section we establish some basic facts concerning the braid groups $B_n$ and their \emph{representations} in the general sense of homomorphisms $\phi\colon B_n\to G$ where $G$ is any group.  Propositions~\ref{p:consecutive} and \ref{p:solv} can be found in \cite{F}, but we include full proofs for the reader's convenience.

For each braid group $B_n$ we fix generators $x_1,\ldots,x_{n-1}$ such that
\begin{align}
\label{braid} x_i x_j x_i = x_j x_i x_j \quad&\hbox{if $|i-j|=1$}, \\
\label{comm} x_i x_j =  x_j x_i \quad&\hbox{if $|i-j|\neq1$}.
\end{align}

\begin{defn}
We say a homomorphism $\phi\colon B_n\to G$ is \emph{constant} if
$$\phi(x_1)=\phi(x_2)=\cdots=\phi(x_{n-1}).$$
\end{defn}

\begin{prop}
\label{p:consecutive}
If $\phi\colon B_n\to G$ is a homomorphism and $\phi(x_i)$ commutes with $\phi(x_{i+1})$
for some $i\le n-2$, then $\phi$ is constant.
\end{prop}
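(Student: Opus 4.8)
The plan is to reduce everything to a single local fact---\emph{adjacent images that commute must be equal}---and then to propagate that equality outward from the index $i$ to all of $\{1,\dots,n-1\}$ using the far-commutation relations (\ref{comm}). Throughout, write $g_j=\phi(x_j)$, so that applying $\phi$ to the relations (\ref{braid}) and (\ref{comm}) gives $g_j g_{j+1} g_j = g_{j+1} g_j g_{j+1}$ and $g_j g_k = g_k g_j$ whenever $|j-k|\neq 1$.

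For the local step, suppose $g_j g_{j+1}=g_{j+1}g_j$ for some $j$. Substituting this commutation into the braid relation rewrites its left-hand side as $g_j g_{j+1} g_j = g_{j+1} g_j^2$ and its right-hand side as $g_{j+1} g_j g_{j+1} = g_{j+1}^2 g_j$. Canceling $g_{j+1}$ on the left and $g_j$ on the right, which is legitimate since $G$ is a group, yields $g_j = g_{j+1}$. In particular, the hypothesis at the given index $i$ produces $g_i = g_{i+1}$.

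For the propagation I would induct outward in both directions. Going up: if $g_j = g_{j+1}$ and the index $j+2\le n-1$ exists, then (\ref{comm}) applied to the pair $(j,j+2)$ (note $|j-(j+2)|=2$) gives $g_j g_{j+2}=g_{j+2}g_j$; replacing $g_j$ by $g_{j+1}$ shows $g_{j+1}$ commutes with $g_{j+2}$, and the local step upgrades this to $g_{j+1}=g_{j+2}$. Iterating from $j=i$ gives $g_i=g_{i+1}=\cdots=g_{n-1}$. Going down is symmetric: if $g_j=g_{j+1}$ and $j-1\ge 1$, then (\ref{comm}) for the pair $(j-1,j+1)$ shows $g_{j-1}$ commutes with $g_{j+1}=g_j$, whence $g_{j-1}=g_j$; iterating downward gives $g_1=\cdots=g_i$. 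Combining the two directions shows all the $g_j$ coincide, i.e.\ $\phi$ is constant.

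The argument is entirely elementary and I do not anticipate a genuine obstacle; the only points requiring care are that the two cancellations take place in the group $G$, and the bookkeeping at the endpoints (the upward induction stops once $j+2>n-1$ and the downward induction once $j-1<1$, which is exactly where the hypothesis $i\le n-2$ guarantees at least one adjacent pair to start from). The single conceptual idea is that far-commutation transports a commuting relation from one adjacent pair to the next, after which the braid relation converts ``commuting'' into ``equal.''
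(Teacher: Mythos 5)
Your proof is correct and follows essentially the same route as the paper's: both derive $\phi(x_i)=\phi(x_{i+1})$ from the braid relation plus commutation, then propagate equality in both directions using the far-commutation relations. No issues.
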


\begin{proof}
Applying (\ref{braid}) when $j=i+1$, we get
$$\phi(x_i)^2\phi(x_{i+1}) = \phi(x_{i+1})^2 \phi(x_i),$$
which implies $\phi(x_i) = \phi(x_{i+1})$.
As $x_i$ commutes with $x_{i+2}$, $\phi(x_{i+1})=\phi(x_i)$ commutes with $\phi(x_{i+2})$.
By induction on $i$,
$$\phi(x_i)=\phi(x_{i+1})=\cdots=\phi(x_{n-1}).$$
Likewise, $\phi(x_{i-1})$ and $\phi(x_i) = \phi(x_{i+1})$ commute, so $\phi(x_i) = \phi(x_{i-1})$, and by downward induction,
$$\phi(x_i) = \phi(x_{i-1}) = \cdots = \phi(x_1).$$
\end{proof}

\begin{cor}
\label{c:center}
If $\phi(x_i) \in Z(G)$ for some $i$, then $\phi$ is constant.
\end{cor}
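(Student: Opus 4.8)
The plan is to deduce this immediately from Proposition~\ref{p:consecutive}, exploiting the fact that a central element commutes with everything in $G$. The only work is to exhibit an adjacent pair of generators whose images commute, so that the proposition can be invoked.

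First I would observe that if $\phi(x_i)\in Z(G)$, then $\phi(x_i)$ commutes with every element of $G$, and in particular with the image of any neighboring generator. Concretely, if $i\le n-2$ then $x_{i+1}$ is a generator and $\phi(x_i)$ commutes with $\phi(x_{i+1})$; Proposition~\ref{p:consecutive} applies directly and shows that $\phi$ is constant. The only remaining case is $i=n-1$, where the generator $x_{i+1}$ does not exist. Here I would instead use the left neighbor: put $j=n-2$, so that $j\le n-2$ and $\phi(x_j)=\phi(x_{n-2})$ commutes with $\phi(x_{n-1})=\phi(x_i)$ by centrality. Proposition~\ref{p:consecutive} then applies with this $j$ and again forces $\phi$ to be constant.

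I do not expect any genuine obstacle; the result is a corollary in both name and substance. The one point requiring a moment's care is the boundary index $i=n-1$, which is handled by passing to the left neighbor $x_{n-2}$, together with the degenerate cases $n\le 2$, in which $B_n$ has at most one generator and $\phi$ is vacuously or trivially constant. Since the argument reduces entirely to a single application of the preceding proposition after checking which adjacent generator to use, the proof should occupy only a line or two.
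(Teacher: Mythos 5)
Your proof is correct and matches the paper's (implicit) argument exactly: the corollary is stated without proof precisely because centrality of $\phi(x_i)$ gives the commuting adjacent pair required by Proposition~\ref{p:consecutive}. Your handling of the boundary index $i=n-1$ via the left neighbor $x_{n-2}$ and of the degenerate small-$n$ cases is exactly the right (and only) point of care.
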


\begin{cor}
\label{c:equal}
If $\phi(x_i) = \phi(x_{i+1})$ for some $i$, then $\phi$ is constant.
\end{cor}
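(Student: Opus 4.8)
The plan is to deduce this immediately from Proposition~\ref{p:consecutive}. The key observation is that every group element commutes with itself, so the hypothesis $\phi(x_i) = \phi(x_{i+1})$ trivially yields that $\phi(x_i)$ commutes with $\phi(x_{i+1})$. Once that commutation is in hand, Proposition~\ref{p:consecutive} does all the work.

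The only point requiring even a moment's attention is the index restriction $i \le n-2$ appearing in Proposition~\ref{p:consecutive}. But this is automatic: the very act of writing $\phi(x_{i+1})$ presupposes that $x_{i+1}$ is one of the chosen generators $x_1,\ldots,x_{n-1}$, forcing $i+1 \le n-1$ and hence $i \le n-2$. So the hypotheses of Proposition~\ref{p:consecutive} are met without further argument, and we conclude that $\phi$ is constant.

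There is essentially no obstacle here, since the corollary is nothing more than the special case of Proposition~\ref{p:consecutive} in which the commuting pair of images happens to coincide. For completeness one could instead reprove it directly by substituting $\phi(x_i) = \phi(x_{i+1})$ into the braid relation $\phi(x_i)\phi(x_{i+1})\phi(x_i) = \phi(x_{i+1})\phi(x_i)\phi(x_{i+1})$; this merely re-derives the commutation and so adds nothing, which is why I would present the one-line reduction to Proposition~\ref{p:consecutive} instead.
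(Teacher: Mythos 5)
Your proof is correct and is exactly the argument the paper intends (the corollary is stated without proof, as an immediate consequence of Proposition~\ref{p:consecutive}): equal images commute, and the index constraint $i\le n-2$ is automatic since $x_{i+1}$ must be a generator. Nothing further is needed.
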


If $j\ge i$, we use the notation $X_{[i,j]}$ for the product $x_i x_{i+1}\cdots x_j$; if $j < i$, we define
$X_{[i,j]}$ to be the identity.

\begin{lemma}
\label{l:trans}
For $k\ge 3$ and $1\le i\le k-2$, we have
$$X_{[1,k]} x_i X_{[1,k]}^{-1} = x_{i+1}.$$
\end{lemma}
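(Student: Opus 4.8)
The plan is to conjugate away the parts of $X_{[1,k]}$ that do not interact with $x_i$, using the commutation relation (\ref{comm}), until only a short three-letter ``core'' word remains, and then to finish with a single application of the braid relation (\ref{braid}). Concretely, I would peel off the tail $x_{i+2}\cdots x_k$ and the head $x_1\cdots x_{i-2}$ of the product, reducing the claim to the identity $X_{[i-1,i+1]}\,x_i\,X_{[i-1,i+1]}^{-1}=x_{i+1}$, which is a one-line braid manipulation.

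First I would strip the tail. Writing $X_{[1,k]}=X_{[1,i+1]}X_{[i+2,k]}$, every generator $x_j$ occurring in $X_{[i+2,k]}$ has $j\ge i+2$, so $|j-i|\ge 2$ and by (\ref{comm}) it commutes with $x_i$. Hence $X_{[i+2,k]}\,x_i\,X_{[i+2,k]}^{-1}=x_i$, and the problem reduces to showing $X_{[1,i+1]}\,x_i\,X_{[1,i+1]}^{-1}=x_{i+1}$.

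Next I would strip the head, assuming $i\ge 2$ (the case $i=1$ I would dispatch directly, since there $X_{[1,2]}\,x_1\,X_{[1,2]}^{-1}=x_1x_2x_1x_2^{-1}x_1^{-1}=x_2$ by one use of (\ref{braid})). Writing $X_{[1,i+1]}=X_{[1,i-2]}X_{[i-1,i+1]}$, each generator $x_j$ in $X_{[1,i-2]}$ has $j\le i-2$, so $|j-i|\ge 2$ and $|j-(i+1)|\ge 3$; by (\ref{comm}) it therefore commutes with both $x_i$ and $x_{i+1}$. Thus conjugation by $X_{[1,i-2]}$ fixes $x_i$ going in and will fix the eventual answer $x_{i+1}$ coming out, so it suffices to prove the core identity $X_{[i-1,i+1]}\,x_i\,X_{[i-1,i+1]}^{-1}=x_{i+1}$. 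Expanding gives $x_{i-1}x_ix_{i+1}x_ix_{i+1}^{-1}x_i^{-1}x_{i-1}^{-1}$; applying (\ref{braid}) in the form $x_ix_{i+1}x_i=x_{i+1}x_ix_{i+1}$ to the middle collapses it to $x_{i-1}x_{i+1}x_{i-1}^{-1}$, and a final appeal to (\ref{comm}), namely that $x_{i-1}$ and $x_{i+1}$ commute, yields $x_{i+1}$.

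I do not expect a genuine obstacle here; the only real care is bookkeeping. One must verify that each commutation invoked is actually licensed by the index condition in (\ref{comm}), and one must handle the degenerate small-index situations (in particular $i=1$, where the head is empty and the core word $X_{[i-1,i+1]}$ is not defined) separately. Beyond that, the argument is simply two commutation-based reductions followed by exactly one instance of the braid relation.
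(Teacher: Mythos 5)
Your proof is correct and follows essentially the same route as the paper's: the paper writes $X_{[1,k]} = X_{[1,i-1]}\, x_i x_{i+1}\, X_{[i+2,k]}$ in one step and then performs exactly your three moves (tail commutes with $x_i$, one braid relation collapses the core to $x_{i+1}$, head commutes with $x_{i+1}$). The only cosmetic difference is that the paper's convention $X_{[i,j]}=1$ for $j<i$ absorbs your separate $i=1$ case automatically.
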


\begin{proof}
\begin{align*}
X_{[1,k]} x_i X_{[1,k]} ^{-1} &= X_{[1,i-1]} x_i x_{i+1} X_{[i+2,k]} x_i X_{[i+2,k]}^{-1} x_{i+1}^{-1} x_i^{-1}
X_{[1,i-1]}^{-1} \\
&= X_{[1,i-1]} x_i x_{i+1} x_i x_{i+1}^{-1} x_i ^{-1} X_{[1,i-1]}^{-1} \\
&= X_{[1,i-1]}  x_{i+1} X_{[1,i-1]} ^{-1} \\
& = x_{i+1}.
\end{align*}
\end{proof}

\begin{lemma}
\label{l:pairs}
If $1\le i,j,k,l\le n-1$, $|i-j|\ge 2$, $|k-l|\ge 2$, then there exists $z = z_{i,j,k,l}\in B_n$ such that
$$z x_i z^{-1} = x_k,\ zx_jz^{-1} = z_l.$$
\end{lemma}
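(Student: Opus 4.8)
The plan is to show that every ``commuting pair'' $(x_i,x_j)$ with $|i-j|\ge 2$ is simultaneously conjugate to the single fixed pair $(x_1,x_3)$; the lemma then follows by composition, since if $u x_i u^{-1}=x_1,\ u x_j u^{-1}=x_3$ and $v x_k v^{-1}=x_1,\ v x_l v^{-1}=x_3$, then $z=v^{-1}u$ satisfies $z x_i z^{-1}=x_k$ and $z x_j z^{-1}=x_l$. (Note that the statement is vacuous unless $n\ge 4$, as otherwise no commuting pair exists.)

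First I would reduce the gap. Assume $i<j$ with $j-i\ge 3$. Writing $a=x_{j-1}$, $b=x_j$, the braid relation $aba=bab$ gives $(ab)^{-1}b(ab)=b^{-1}a^{-1}(bab)=b^{-1}a^{-1}(aba)=a$, so conjugation by $(x_{j-1}x_j)^{-1}$ sends $x_j\mapsto x_{j-1}$; and since $i\le j-3$, this element commutes with $x_i$ and fixes it. Thus $(x_i,x_j)$ is conjugate to $(x_i,x_{j-1})$, and iterating reduces the gap to $2$, giving $(x_i,x_j)\sim(x_i,x_{i+2})$. The point of using the two-letter conjugator $(x_{j-1}x_j)^{-1}$ rather than a longer interval word is that it never involves $x_{j+1}$, so no constraint from the top end enters here, and the \emph{lower} index is deliberately kept fixed.

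Next I would move a gap-$2$ pair down to $(x_1,x_3)$ by shifting. Set $\delta=X_{[1,n-1]}$. Lemma~\ref{l:trans} gives $\delta x_p\delta^{-1}=x_{p+1}$ for $1\le p\le n-3$, and the endpoint case $p=n-2$ follows from the same computation: $X_{[1,k]}x_{k-1}X_{[1,k]}^{-1}=X_{[1,k-2]}x_k X_{[1,k-2]}^{-1}=x_k$, using $x_{k-1}x_kx_{k-1}=x_kx_{k-1}x_k$ together with the fact that $x_k$ commutes with $x_1,\dots,x_{k-2}$. Hence $\delta^{-1}x_q\delta=x_{q-1}$ for all $2\le q\le n-1$, and conjugating $(x_m,x_{m+2})$ by $\delta^{-(m-1)}$ walks it down through gap-$2$ pairs to $(x_1,x_3)$, with all indices remaining in the legal range $[1,n-1]$ at every step. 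Combined with the previous paragraph, this proves $(x_i,x_j)\sim(x_1,x_3)$ whenever $i<j$.

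Finally I would handle the order. For $i>j$, applying the same two steps to the ascending pair $(x_j,x_i)$ produces an element carrying $(x_i,x_j)$ to $(x_3,x_1)$, so it only remains to swap: I would take the half-twist $\Delta=x_1x_2x_1x_3x_2x_1\in B_4\subseteq B_n$, which satisfies $\Delta x_r\Delta^{-1}=x_{4-r}$ and hence interchanges $x_1$ and $x_3$, showing $(x_3,x_1)\sim(x_1,x_3)$. The step I expect to be the main obstacle is the shifting in the third paragraph near the top generator: the interval conjugators coming from Lemma~\ref{l:trans} cannot by themselves reach or leave the extreme pair $(x_{n-3},x_{n-1})$ without invoking the nonexistent generator $x_n$, which is precisely why the endpoint identity $\delta x_{n-2}\delta^{-1}=x_{n-1}$ (just beyond the range stated in Lemma~\ref{l:trans}) is needed and why the gap reduction was arranged to fix the lower index.
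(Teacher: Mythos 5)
Your proof is correct and takes essentially the same route as the paper's: normalize every ascending commuting pair to a single canonical pair by conjugation (the paper pushes the larger index up to $n-1$ and then shifts the smaller one, while you reduce the gap to $2$ and shift down to $(x_1,x_3)$), and handle the reversed order with the half-twist $x_1x_2x_1x_3x_2x_1$, which is literally the paper's element $y$. The conjugators differ in detail, and your explicit treatment of the endpoint case of Lemma~\ref{l:trans} is a welcome bit of care, but the overall structure is the same.
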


\begin{proof}
First we assume $i < j$ and $k<l$.
By Lemma~\ref{l:trans}, without loss of generality we may assume $j=l=n-1$.
As $X_{[1,n-3]}$ commutes with $x_{n-1}$, the ordered pair $(x_i,x_{n-1})$
can be conjugated to $(x_{i+1},x_{n-1})$ as long as $1\le i\le n-4$.  By induction on $i$,
all the $(x_i,x_{n-1})$ with $i\le n-3$ are conjugate.

To treat the case that $i > j$ or $k > l$, it suffices to prove that $(x_1,x_3)$ can be conjugated to
$(x_3,x_1)$.  Letting
$$y = x_1 x_2 x_3 x_1 x_2 x_1 = x_1 x_2 x_1 x_3 x_2 x_1,$$
we have
\begin{align*}
y x_1 &= x_1 x_2 x_3 x_1 x_2 x_1 x_1 = x_1 x_2 x_3 x_2 x_1 x_2 x_1 = x_1 x_3 x_2 x_3 x_1 x_2 x_1
= x_3 y; \\
y x_3 &= x_1 x_2 x_1 x_3 x_2 x_3 x_1 =  x_1 x_2 x_1 x_2 x_3 x_2 x_1 =  x_1 x_1 x_2 x_1 x_3 x_2 x_1
 = x_1 y.
\end{align*}
\end{proof}

Now let $0\to A\to G\to H\to 0$ be a central extension.  We write $[h_1,h_2]^\sim$ for the commutator
$g_1 g_2 g_1^{-1} g_2^{-1}\in G$, where $g_i$ is any element mapping to $h_i$.  As the extension is central, this is well-defined.

\begin{lemma}
\label{l:cmodc}
If $0\to A\to G\overset{\pi}{\to} H\to 0$ is a central extension and $\phi\colon B_n\to G$
is a homomorphism such that $\pi\circ\phi$ is constant, then $\phi$ is constant.
\end{lemma}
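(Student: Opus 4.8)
The plan is to reduce the statement to Proposition~\ref{p:consecutive}: it suffices to exhibit a single index $i\le n-2$ for which $\phi(x_i)$ and $\phi(x_{i+1})$ commute in $G$. (If $n\le 2$ the claim is vacuous, since $B_n$ then has at most one standard generator, so I assume $n\ge 3$.) Writing $g_i=\phi(x_i)$ and $h=\pi(g_1)$, the hypothesis that $\pi\circ\phi$ is constant says precisely that $\pi(g_i)=h$ for every $i$; in particular $\pi(g_1)=\pi(g_2)=h$, so $g_1$ and $g_2$ are both lifts of $h$ along $\pi$ and differ by an element of $A=\ker\pi$, which by hypothesis lies in the center of $G$.

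The key step is then to observe that $g_1$ and $g_2$ commute. This is exactly what the well-definedness of the bracket $[\,\cdot\,,\,\cdot\,]^\sim$ buys us: since $\pi(g_1)=\pi(g_2)=h$, the commutator $g_1 g_2 g_1^{-1} g_2^{-1}$ equals $[h,h]^\sim$, and because $[h,h]^\sim$ may be computed using any lifts of its two arguments, choosing the same lift for both shows $[h,h]^\sim=1$. Equivalently, writing $g_2=a g_1$ with $a\in A$ central, one checks directly that $g_1(a g_1)=a g_1^2=(a g_1)g_1$. Either way $g_1$ and $g_2$ commute, and Proposition~\ref{p:consecutive} (applied with $i=1$) immediately yields that $\phi$ is constant.

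I do not anticipate a genuine obstacle here; the entire content is the interplay between centrality of $A$ and the braid relation already packaged in Proposition~\ref{p:consecutive}. The one point requiring care is simply to use centrality of $A$ honestly—without it, two lifts of $h$ need not commute and the reduction fails. An alternative, slightly more computational route avoids quoting Proposition~\ref{p:consecutive} altogether: substituting $g_i=c_i g_1$ (with $c_i\in A$ central and $c_1=1$) into the braid relation $g_i g_{i+1} g_i=g_{i+1} g_i g_{i+1}$ and cancelling the common central factor $g_1^3$ gives $c_i c_{i+1} c_i=c_{i+1} c_i c_{i+1}$, whence $c_i^2 c_{i+1}=c_{i+1}^2 c_i$ and so $c_i=c_{i+1}$; running this over all $i$ forces every $c_i=1$, i.e. $g_i=g_1$ for all $i$. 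I prefer the first route, since invoking Proposition~\ref{p:consecutive} is cleaner and makes transparent why the central-extension hypothesis is the only ingredient being used.
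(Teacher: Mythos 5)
Your proof is correct and follows the paper's own argument exactly: the paper likewise notes that any two elements of $G$ mapping to the same element of $H$ differ by a central element and hence commute, and then invokes Proposition~\ref{p:consecutive}. The extra computational variant you sketch is also valid but unnecessary.
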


\begin{proof}
Any two elements of $G$ which map to the same element of $H$ must commute.  The lemma therefore follows from Proposition~\ref{p:consecutive}.
\end{proof}

\begin{prop}
\label{p:lift}
If $0\to A\to G\overset{\pi}{\to} H\to 0$ is a central extension and $\phi\colon B_n\to H$ is a homomorphism such that $[\phi(x_i),\phi(x_j)]^\sim = 1$ for some $i,j$ with $|i-j|\ge 2$, then
$\phi$ lifts to a homomorphism $\tilde\phi\colon B_n\to G$.
\end{prop}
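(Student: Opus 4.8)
The plan is to lift $\phi$ by choosing images for the generators and then correcting them by central elements. Since $\pi$ is surjective, for each $i$ I pick some $g_i\in G$ with $\pi(g_i)=\phi(x_i)$, and I look for correction factors $t_i\in A$ so that the elements $g_it_i$ satisfy the two defining families of relations of $B_n$: the braid relations (\ref{braid}) and the far-commutation relations (\ref{comm}). Any such choice determines a homomorphism $\tilde\phi\colon B_n\to G$ with $\pi\circ\tilde\phi=\phi$, by the universal property of the presentation. I would treat the two families separately, the main point being that the corrections needed to fix the braid relations do not disturb the commutation relations.

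First I would dispose of the far commutations. For $|i-j|\ge 2$ set $c_{ij}=g_ig_jg_i^{-1}g_j^{-1}$; since $x_i$ and $x_j$ commute in $B_n$ and $\phi$ is a homomorphism, $c_{ij}\in A$, and in fact $c_{ij}=[\phi(x_i),\phi(x_j)]^\sim$ is independent of the chosen lifts, because multiplying $g_i$ and $g_j$ by central elements does not change their commutator. The crucial observation is that this invariant is unchanged under simultaneous conjugation: if $zx_iz^{-1}=x_k$ and $zx_jz^{-1}=x_l$ in $B_n$, then choosing a lift $\tilde z$ of $\phi(z)$ gives $c_{kl}=[\tilde z g_i\tilde z^{-1},\tilde z g_j\tilde z^{-1}]=\tilde z\,c_{ij}\,\tilde z^{-1}=c_{ij}$, the last equality holding because $c_{ij}$ is central. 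By Lemma~\ref{l:pairs} any two far pairs are simultaneously conjugate, so all the $c_{ij}$ coincide; the hypothesis that $[\phi(x_i),\phi(x_j)]^\sim=1$ for a single far pair then forces $c_{ij}=1$ for every pair with $|i-j|\ge 2$. Thus, for \emph{any} choice of lifts, the relations (\ref{comm}) already hold exactly.

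It remains to repair the braid relations. For $1\le i\le n-2$ put $a_i=g_ig_{i+1}g_i(g_{i+1}g_ig_{i+1})^{-1}$, which lies in $A$ as before. A short computation, using that the $t_i$ are central, shows that replacing $g_i,g_{i+1}$ by $g_it_i,g_{i+1}t_{i+1}$ multiplies $a_i$ by $t_it_{i+1}^{-1}$. Hence I would set $t_1=1$ and solve the recursion $t_{i+1}=t_ia_i$, which annihilates every $a_i$ since $A$ is abelian; this is always solvable because $A$ is a group, and it exactly matches the $n-1$ available parameters against the $n-2$ braid relations. Since the $t_i$ are central, this adjustment leaves all the commutators $c_{ij}$ untouched, so the far commutations established above persist. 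The elements $g_it_i$ then satisfy all of (\ref{braid}) and (\ref{comm}) and define the desired lift $\tilde\phi$.

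The substantive step is the second paragraph: recognizing the far-commutator $[\phi(x_i),\phi(x_j)]^\sim$ as a conjugation-invariant, lift-independent element of $A$, so that the single-pair hypothesis propagates to all far pairs through Lemma~\ref{l:pairs}. Once the commutation relations hold automatically, the braid relations present no real obstruction, since killing the defects $a_i$ amounts to solving a triangular system over the abelian group $A$, which is always possible.
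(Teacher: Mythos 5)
Your proof is correct. The first half coincides with the paper's argument: both use Lemma~\ref{l:pairs} together with the conjugation-invariance and lift-independence of $[\ ,\ ]^\sim$ to propagate the vanishing of one far commutator to all pairs with $|i-j|\ge 2$, so that relations (\ref{comm}) hold for arbitrary lifts. Where you diverge is in repairing the braid relations. The paper does not take arbitrary lifts $g_i$: it chooses $\tilde y$ lifting $\phi(X_{[1,n-1]})$ and sets $g_i=\tilde y^{i-1}\tilde x_1\tilde y^{1-i}$, so that by Lemma~\ref{l:trans} all the defects $a_i$ are conjugate in $G$, hence (being central) all equal to a single $a$, after which the uniform correction $\tilde x_i=a^ig_i$ works. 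You instead keep arbitrary lifts, compute that replacing $g_i$ by $g_it_i$ multiplies $a_i$ by $t_it_{i+1}^{-1}$, and solve the triangular system $t_1=1$, $t_{i+1}=t_ia_i$; this is a standard cocycle-correction argument and is slightly more elementary, since it dispenses with the coherent choice of lifts via $\tilde y$ (indeed, with the paper's choice of $g_i$ your recursion gives $t_i=a^{i-1}$, which agrees with the paper's correction up to an irrelevant global central factor). Both arguments are complete; yours trades the structural observation that all defects coincide for a telescoping product, and has the minor advantage of not invoking Lemma~\ref{l:trans} a second time.
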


\begin{proof}
As $[\ ]^\sim$ respects conjugation, Lemma~\ref{l:pairs} implies
$$[\phi(x_i),\phi(x_j)]^\sim = 1$$
for all $i,j$ with $|i-j|\ge 2$.  Fix an element $\tilde x_1\in G$ with $\pi(\tilde x_1) = \phi(x_1)$
and an element $\tilde y\in G$ with $\pi(\tilde y) = \phi(X_{[1,n-1]})$.  By Lemma~\ref{l:trans},
$$\pi(\tilde y^k \tilde x_1\tilde y^{-k}) = \phi(x_{k+1}),\ k=0,1,\ldots,n-2.$$
Let
$$g_i = \tilde y^{i-1} \tilde x_1 \tilde y^{1-i}.$$
Thus $g_i$ and $g_j$ commute when $|i-j|\neq 1$, and
the elements
$$a_i := g_i g_{i+1} g_i g_{i+1}^{-1} g_i^{-1} g_{i+1}^{-1}$$
are all conjugate in $G$ and lie in $A$.  Thus, they all coincide; denoting this common element $a$,
and setting $\tilde x_i = a^i g_i$, we have $\pi(\tilde x_i) = \phi(x_i)$, and the $\tilde x_i$ satisfy the relations (\ref{braid}) and (\ref{comm}).  Defining a homomorphism $\tilde\phi$ by the equations
$\tilde\phi(x_i) = \tilde x_i$, we see that $\tilde \phi$ is a lift of $\phi$.
\end{proof}

\begin{prop}
\label{p:solv}
If $n\ge 6$, then every homomorphism from $B_n$ to a solvable group $G$ is constant.
\end{prop}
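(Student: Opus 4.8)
The plan is to induct on the derived length $\ell$ of $G$, keeping $n$ fixed. When $\ell\le 1$ the group $G$ is abelian, so $\phi(x_1)$ commutes with $\phi(x_2)$ and Proposition~\ref{p:consecutive} disposes of the base case. For the inductive step I would set $A=G^{(\ell-1)}$, the last nontrivial term of the derived series: this is an abelian subgroup that is characteristic, hence normal, in $G$, and $G/A$ has derived length $\ell-1$. By the inductive hypothesis the induced homomorphism $\pi\circ\phi\colon B_n\to G/A$ is constant, so all the $\phi(x_i)$ lie in a single coset $sA$ with $s=\phi(x_1)$. Writing $\phi(x_i)=s\,a_i$ with $a_i\in A$ and $a_1=1$, the problem reduces to proving that all the $a_i$ coincide.

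The next step is to translate the defining relations of $B_n$ into linear identities on the $a_i$ inside $A$, which I regard additively as a module over $\Z[\sigma^{\pm1}]$, where $\sigma\colon A\to A$ is conjugation by $s$ (an automorphism of $A$ because $A\trianglelefteq G$). Substituting $\phi(x_i)=s\,a_i$ into the commutation relation $x_ix_j=x_jx_i$ for $|i-j|\ge 2$ and cancelling the powers of $s$ gives $(1-\sigma)a_i=(1-\sigma)a_j$. Substituting into the braid relation $x_ix_{i+1}x_i=x_{i+1}x_ix_{i+1}$, moving all copies of $s$ to one side, and using that $\sigma$ is injective yields $(1-\sigma+\sigma^2)a_i=(1-\sigma+\sigma^2)a_{i+1}$.

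I would then argue that these two families of identities force every $a_i$ to equal $a_1$. Put $b_i=a_i-a_1$. The braid identities link consecutive indices, so $(1-\sigma+\sigma^2)b_i=0$ for all $i$. The commutation identities relate $i$ and $j$ whenever $|i-j|\ge 2$; the graph on $\{1,\dots,n-1\}$ with those edges (the complement of a path) is connected as soon as $n\ge 5$, which holds under our hypothesis $n\ge 6$, and this is exactly where the lower bound on $n$ enters. Connectivity then propagates $(1-\sigma)b_i=0$ to all $i$. Subtracting the two relations gives $\sigma^2 b_i=0$, and since $\sigma$ is an automorphism this forces $b_i=0$. Hence $a_i=a_1$ for every $i$, so $\phi(x_i)=s\,a_1$ is independent of $i$ and $\phi$ is constant.

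The one genuine subtlety—where I expect to spend the most care—is the passage from the abelian-quotient statement to the actual equality of the $a_i$. A solvable group need not have nontrivial center (for instance $S_3$), so Corollary~\ref{c:center} and the central-extension machinery of Lemma~\ref{l:cmodc} and Proposition~\ref{p:lift} do not apply directly; one is instead forced to work with the non-central but normal subgroup $A$ and to carry the twist $\sigma$ throughout. Once the relations are recorded as $\Z[\sigma^{\pm1}]$-module identities the concluding linear algebra is immediate, so the only combinatorial point that must be checked is the connectivity of the commutation graph, which simultaneously pins down the admissible range of $n$.
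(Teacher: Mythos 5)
Your proof is correct and follows essentially the same route as the paper's: induction on the derived length, reduction modulo the last nontrivial derived term $A$, the commutation relations together with connectivity of the graph on $\{1,\dots,n-1\}$ with edges $|i-j|\ge 2$, and finally the braid relation to force the coset representatives to coincide. The only difference is cosmetic: you record the relations as $\Z[\sigma^{\pm 1}]$-module identities $(1-\sigma)b_i=0$ and $(1-\sigma+\sigma^2)b_i=0$ and subtract to get $\sigma^2 b_i=0$, whereas the paper performs the same cancellation multiplicatively.
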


\begin{proof}
We use induction on the length of the derived series.  The proposition follows immediately from Corollary~\ref{c:center} when
$G$ is abelian, so without loss of generality we may assume that the last non-trivial term $A$ in the derived series of $G$ is a proper subgroup of $G$.  By the induction hypothesis, any homomorphism $B_n\to G/A$ is constant.
We therefore choose an element $g\in G$ and a sequence $a_1,\ldots,a_{n-1}\in Z$ such that
$\phi(x_i) = a_i g$ for $i=1,\ldots,n-1$.  Writing $a^g$ for $gag^{-1}$, we have
$$a_i a_j^g g^2 = \phi(x_i x_j) = \phi(x_j x_i) = a_j a_i^g g^2$$
and therefore
$$a_i^{-1} a_i^g = a_j^{-1} a_j^g$$
whenever $|i-j|\ge 2$.  The graph on the vertex set $\{1,2,\ldots,n-1\}$ defined by the relation $|i-j|\ge 2$ is connected for $n\ge 6$.  Thus,
$$a_1^{-1} a_1^g = \cdots = a_{n-1}^{-1} a_{n-1}^g = a$$
for some $a\in A$.  The braid relation (\ref{braid}) for $j=i+1$ implies
$$a^3 a_i^2 a_{i+1} = a_i a_{i+1}^g a_i^{g^2} = a_{i+1} a_i^g a_{i+1}^{g^2} = a^3 a_i a_{i+1}^2,$$
so $a_1=\cdots=a_{n-1}$, and $\phi$ is constant as claimed.
\end{proof}

\begin{thm}
\label{t:rootbound}
If $\cG$ is a linear algebraic group over a field $K$ with solvable component group,
and $n \ge 6\sqrt{\dim \cG}+3$, then every homomorphism $B_n\to \cG(K)$ is constant.
\end{thm}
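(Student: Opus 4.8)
The plan is to prove the contrapositive: if $\phi\colon B_n\to\cG(K)$ is \emph{not} constant, then $n<6\sqrt{\dim\cG}+3$, i.e. $\dim\cG>\big((n-3)/6\big)^2$. So I must extract roughly $n^2$ dimensions from a non-constant representation. The one place the hypothesis enters is the component group: since $\cG/\cG^{\circ}$ is a finite solvable group, the composite $B_n\to\cG/\cG^{\circ}$ lands in a solvable group and is constant by Proposition~\ref{p:solv} as soon as $n\ge 6$ (which the bound forces once $\dim\cG\ge 1$; the case $\dim\cG=0$ is immediate from Proposition~\ref{p:solv}). Hence all $g_i:=\phi(x_i)$ lie in a single component of $\cG$. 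By Corollary~\ref{c:equal} and Proposition~\ref{p:consecutive} I may assume no two adjacent generators are equal or commute, since otherwise $\phi$ is already constant; recall also that the $g_i$ are pairwise conjugate and that $g_i,g_j$ commute whenever $|i-j|\ge 2$.

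The main step is a dimension count along the descending chain of centralizers $Z_0=\cG$ and $Z_k=Z_{\cG}(g_1,g_3,\dots,g_{2k-1})$. A generator $g_m$ lies in $Z_{k-1}$ precisely when $|m-(2i-1)|\ge 2$ for all $i\le k-1$, which holds for every $m\ge 2k-1$; thus $Z_{k-1}$ contains the contiguous block $g_{2k-1},g_{2k},\dots,g_{n-1}$, defining a representation of $B_{n-2k+2}$ that is non-constant because $g_{2k-1}\ne g_{2k}$. In particular $g_{2k-1}$ is non-central in $Z_{k-1}$: the element $g_{2k}$ lies in $Z_{k-1}$ but does not commute with $g_{2k-1}$, provided $2k\le n-1$. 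Setting $d_k=\dim Z_{k-1}-\dim Z_k$, which is the dimension of the conjugacy class of $g_{2k-1}$ in $Z_{k-1}$, this gives $d_k\ge 1$ for each $k\le(n-1)/2$, and telescoping yields $\dim\cG\ge\sum_k d_k$.

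The bound $d_k\ge 1$ only produces $\dim\cG\gtrsim n$, which is far too weak; the quadratic gain must come from the fact that a generator of a non-constant braid representation has a \emph{large} conjugacy class. The key input I would isolate is an Orbit Lemma: if $\psi\colon B_N\to\cH(K)$ is non-constant with $\cH$ connected, then the conjugacy class of $\psi(x_1)$ has dimension at least $\lfloor(N-1)/2\rfloor$. Applying this to the non-constant sub-representation $B_{n-2k+2}\to Z_{k-1}^{\circ}$ gives $d_k\ge\lfloor(n-2k+1)/2\rfloor$, whence $\dim\cG\ge\sum_{k=1}^{\lfloor(n-1)/2\rfloor}\lfloor(n-2k+1)/2\rfloor$, a sum of order $n^2/8$. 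This comfortably exceeds $\big((n-3)/6\big)^2$ and closes the argument (with room to spare in the constant).

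Proving the Orbit Lemma is where the real work lies, and I expect it to be the main obstacle. The natural approach is a flag inside the irreducible conjugacy class $W=\cH\cdot h_1$ of $h_1=\psi(x_1)$: put $W_k=\{w\in W: w\text{ commutes with }h_1,h_3,\dots,h_{2k-1}\}$ and note that $h_{2k}$ witnesses a strict inclusion $W_k\subsetneq W_{k-1}$, so that one hopes for $\dim W\ge\lfloor(N-1)/2\rfloor$. The delicate point is that the intermediate $W_k$ need not be irreducible, so a strict inclusion of closed subvarieties does not by itself lower the dimension; making the flag actually drop dimension at each stage — by tracking a single irreducible component through $h_1$, equivalently by working with the identity components of the successive centralizers and the connected subgroup generated by a commuting family of conjugates — is the crux. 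A secondary bookkeeping point is that the centralizers $Z_{k-1}$ in the chain may be disconnected, so one must either prove the Orbit Lemma directly for non-connected $\cH$ (using that conjugacy-class dimension is detected by $\cH^{\circ}$) or check that the relevant sub-representations take values in the identity component.
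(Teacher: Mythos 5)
There is a genuine gap: your entire argument rests on the ``Orbit Lemma'' ($\dim$ of the conjugacy class of $\psi(x_1)$ is at least $\lfloor(N-1)/2\rfloor$ for a non-constant $\psi\colon B_N\to\cH(K)$), which you do not prove, and you yourself identify the obstruction that defeats the natural proof: a strictly decreasing chain of centralizers need not drop in dimension, because the successive centralizers are disconnected and an element can centralize the identity component without centralizing the whole group. This is not a ``secondary bookkeeping point'' --- it also invalidates your fallback claim $d_k\ge 1$ (if $g_{2k-1}$ centralizes $Z_{k-1}^{\circ}$ but not $g_{2k}$, then $d_k=0$), and it is precisely the issue that consumes most of the paper's proof. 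The paper controls the component groups of the successive centralizers using the Springer--Steinberg theorem (centralizers of semisimple elements in simply connected groups are connected), the Alexeevski--Mizuno classification of component groups of unipotent centralizers (solvable except for an $S_5$ in $E_8$), and Proposition~\ref{p:solv} to kill the solvable component-group part; the quantitative dimension drop $2\sqrt{\dim(\cH_1/\cR_1)}<2\sqrt{\dim\cH}-1$ then comes from Seitz's classification of maximal subgroups of simple algebraic groups. Without input of this kind, your Orbit Lemma is an open assertion, not a proof.

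It is worth noting how the paper's architecture differs from yours even in shape, since the differences are forced by the obstruction above. Rather than descending along the centralizers of $g_1,g_3,g_5,\dots$ inside a fixed $\cG$, the paper runs an induction on $r\approx 2\sqrt{\dim\cG}$ in which each step first normalizes the ambient group (reduce to $\cG$ reductive, then to $\cG^{\circ}\cong\cH^k$ adjoint simple factors permuted cyclically) and only then passes to the centralizer of the single element $x=\phi(x_{n-2})^{-1}\phi(x_{n-1})$, which contains $\phi(B_{n-3})$; the cost is $n\mapsto n-3$ per step rather than $n\mapsto n-2$, but the payoff is that at each stage one is again in a situation where the structure theory of centralizers in \emph{simple} algebraic groups applies. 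If you want to salvage your approach, you would need to prove the Orbit Lemma, and the only route visible runs through the same classification results --- at which point you have essentially reproduced the paper's argument.
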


\begin{proof}
We assume without loss of generality that $K$ is algebraically closed.
We prove by induction on $r$ that the theorem is true whenever $2\sqrt{\dim \cG}\le r$ and $n\ge 3r$
the base case $r=1$ being trivial.
If $r\ge 2$, then $n\ge 6$, so the composition of $\phi\colon B_n\to \cG(K)$ with the quotient map
$\cG(K)\to \cG(K)/\cG^\circ(K)$ is constant.
We may therefore assume that $\cG/\cG^\circ$ is cyclic.
We may also assume that the theorem is true in dimension $<\dim \cG$.
If $\cU$ denotes the unipotent radical of $\cG^\circ$, then $\cU$ is a normal algebraic subgroup of $\cG$.
If the composition homomorphism $B_n\to (\cG/\cU)(K)$ is constant, then $B_n$ maps to a solvable subgroup of $\cG(K)$, namely, an extension of the (cyclic) image of this homomorphism by $\cU(K)$.
By Proposition~\ref{p:solv}, this implies that $\phi$ is constant.  Without loss of generality, therefore, we may assume that $\cG$ is reductive.  Likewise, composing $\phi$ with the quotient of $\cG$ by the center of $\cG^\circ$, we may assume without loss of generality that $\cG^\circ$ is adjoint semisimple.

If there exist positive dimensional normal subgroups $\cN_1,\ldots,\cN_t$  of $\cG$ such that
$\cN_1(K)\cap\cdots\cap \cN_t(K) = \{1\}$, then the compositions of $\phi$ with the projections
$\cG(K)\to (\cG/\cN_i)(K)$ are all constant, and therefore $\phi$ is constant.
If $\cG^\circ$ has at least two non-isomorphic simple factors, then the product of all factors of any one type is a proper normal subgroup of $\cG$.  We may therefore assume that $\cG^\circ \cong \cH^k$ for some positive integer $k$ and some (adjoint) simple algebraic group $\cH$.  Moreover, conjugation by
a generator of $\cG/\cH^k$ induces a well-defined outer automorphism of $\cH^k$ and therefore a permutation $\sigma$ of the factors, which are the minimal non-trivial normal subgroups of $\cH^k$.
Without loss of generality we may assume that this permutation is a $k$-cycle, since otherwise, each orbit of $\sigma$ determines a product of factors $\cH$ which is a normal subgroup of $\cG$.

Let $x = \phi(x_{n-2})^{-1} \phi(x_{n-1})$, and let $B_{n-3}$ denote the subgroup of $B_n$ generated
by $x_1,\ldots,x_{n-4}$.  Thus, $x$ lies in $\cH(K)^k$ and $\phi(B_{n-3})$ lies in the centralizer
of $x$ in $\cG(K)$.  If $x$ is the identity, then $\phi$ is constant by Corollary~\ref{c:equal}.  Therefore, the image of $x$ in one of the factors of $\cH(K)^k$ is non-trivial, so without loss of generality we may assume that the centralizer $\cZ_x\subset \cG$ satisfies
$$\cZ_x \cap \cH^k \subset \cH_1\times \cH^{k-1},$$
where $\cH_1$ is the centralizer of a non-trivial element $x_1$ of $\cH$.
Since every element $x_1\in \cH(K)$ has a Jordan decomposition, and the centralizer of
$x_1$ is contained both in the centralizer of its unipotent part and in that of its semisimple part, without
loss of generality we may assume that $\cH_1$ is the centralizer of either a semisimple
or a unipotent element.
We have seen that $\phi(B_{n-3})$ lies in $\cZ_x(K)$.  Suppose $\cZ_x$ fails to meet every component
of $\cG$.  Then the factors of $\cH^k$ form $t\ge 2$ orbits of equal cardinality
under conjugation by $\cZ_x$.
Let $\cN_i$ denote the algebraic subgroup of elements in $\cZ_x\cap \cH^k$
which are trivial on all factors
$\cH$ belonging to the $i$th orbit of this action, it suffices to prove that the composition $\phi_i$ of
$\phi|_{B_{n-3}}$ with the projection $\cZ_x(K)\to (\cZ_x / \cN_i)(K)$ is constant for each $i$.  However,
$$\dim \cZ_x/\cN_i \le \frac{\dim \cH^k}{t} = \frac{\dim \cG}{t} \le r^2/8 \le ((r-1)^2)/4$$
since $\cG$ non-trivial and semisimple implies $r\ge 4$.
Thus, the induction hypothesis implies that each $\phi_i$ is constant and therefore that $\phi$ is constant.

This leaves the case that $t=1$.  In this case, $\cZ_x\cap \cH^k$ is contained in a product
$\cH_1\times\cdots\times \cH_k$ where each $\cH_i$ is isomorphic to $\cH_1$.
If $\cR_i$ denotes the radical of $\cH_i^\circ$, then $\cR_1\times\cdots\times \cR_k$ is a normal subgroup
of $\cZ_x$, and it suffices to prove that every homomorphism
$$B_n\to (\cZ_x/(\cR_1\times\cdots\times \cR_k))(K)$$
is constant.  The component group of $\cZ_x$ is isomorphic to an extension of a cyclic group
by $(\cH_1/\cH_1^\circ)^k$.  If $\cH_1$ is the centralizer of a semisimple element, the group of
components is commutative.  This is an immediate consequence of the
theorem of Springer-Steinberg asserting that
the centralizer of a semisimple element in a simply-connected semisimple group is connected.
If $\cH_1$ is the centralizer of a unipotent element, the component group $\cH_1/\cH_1^\circ$ is either
solvable or isomorphic to $S_5$, by a theorem of Alexeevski \cite{A} and Mizuno \cite{M}.  Except in the last case, which can only occur if $\cH\cong E_8$, the
component group of $\cZ_x/(\cR_1\times\cdots\times \cR_k)$ is solvable, so the induction hypothesis applies.
Every automorphism of $S_5\cong \PGL_2(\F_5)$ is inner and therefore extends to an automorphism of
the algebraic group $\PGL_2$ in characteristic $5$.  In the case of $S_5$, $n\ge 6\sqrt{248k}+3$,
so $n-3\ge 6\sqrt{3k}+3$.  Thus, the induction hypothesis implies that any homomorphism
from $B_{n-3}$ to an extension of $\Z/k\Z$ by $S_5^k$ is constant, and replacing $\cZ_x$ with a suitable open subgroup, we may again assume that $\cZ_x/\cZ_x^\circ$ is solvable.

Finally, applying the classification of maximal subgroups of simple algebraic groups
\cite{Se1,Se2}, we see that
$$2\sqrt{\dim(\cH_1/\cR_1)} < 2\sqrt{\dim \cH}-1.$$
The theorem now follows by induction.

\end{proof}

A variant of this argument gives the following:

\begin{thm}
\label{t:sporadic}
If $H$ is a finite simple group, $G := H^k\rtimes C$, where $C$ is solvable, and
$n \ge 3\log_2 k|H|$, then every homomorphism $B_n\to G$ is constant.
\end{thm}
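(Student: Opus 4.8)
The plan is to prove, by induction on the quantity $k|M|$ (call it the \emph{measure}), the following slightly more general statement: if $M$ is any finite group, $G$ is a finite group containing a normal subgroup $N\cong M^k$ whose $k$ direct factors $M_1,\dots,M_k$ are permuted under conjugation by $G$, and $G/N$ is solvable, then every homomorphism $\phi\colon B_n\to G$ with $n\ge 3\log_2 k|M|$ is constant. The theorem is the case $M=H$, $N=H^k$, $G/N\cong C$. The base of the induction is $N=1$, where $G$ is solvable and the claim is Proposition~\ref{p:solv} (note $n\ge 3\log_2 k|H|>6$ at the outset). For the inductive step I first compose $\phi$ with $G\to G/N$; since $G/N$ is solvable and $n\ge 6$, this composite is constant, so all $\phi(x_i)$ induce the \emph{same} permutation $\sigma$ of the factors $M_1,\dots,M_k$.

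Next I split according to $\sigma$. \emph{Case A: $\sigma$ is not a $k$-cycle,} so it has orbits $O_1,\dots,O_t$ with $t\ge 2$. Because $\phi(B_n)$ maps into $\langle\sigma\rangle$, each $M^{O_i}:=\prod_{j\in O_i}M_j$ is $\phi(B_n)$-invariant, and so is $N_i:=\prod_{l\ne i}M^{O_l}$, which is therefore normal in $\phi(B_n)$. The quotient $\phi(B_n)/N_i$ again has the required shape, with normal subgroup $\cong M^{|O_i|}$ (its factors permuted by $\sigma|_{O_i}$) and solvable quotient, and measure $|O_i|\,|M|<k|M|$. By induction each composite $\phi_i\colon B_n\to\phi(B_n)/N_i$ is constant; since $\bigcap_i N_i=1$, the embedding $\phi(B_n)\hookrightarrow\prod_i\phi(B_n)/N_i$ shows $\phi$ itself is constant.

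\emph{Case B: $\sigma$ is a $k$-cycle} (equivalently, $\langle\sigma\rangle$ is transitive). Put $x=\phi(x_{n-2})^{-1}\phi(x_{n-1})$; as $\phi(x_{n-2}),\phi(x_{n-1})$ have equal image in $G/N$, we have $x\in N$, and if $x=1$ then $\phi$ is constant by Corollary~\ref{c:equal}. Write $x=(x^{(1)},\dots,x^{(k)})$. The elements $\phi(x_1),\dots,\phi(x_{n-4})$ all centralize $x$ and all induce $\sigma$ on the factors; since $\sigma$ is a $k$-cycle, this forces the components $x^{(1)},\dots,x^{(k)}$ to lie in a single $\Aut(M)$-orbit, hence to be either all central in $M$ or all non-central. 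If all are central, then $x\in Z(N)$, so $\phi(x_{n-2})\equiv\phi(x_{n-1})$ modulo the abelian normal subgroup $Z(N)$; Corollary~\ref{c:equal} makes $\phi$ constant modulo $Z(N)$, and repeating the commutator computation in the proof of Proposition~\ref{p:solv} with $Z(N)$ in place of $A$ forces the coset representatives to coincide, so $\phi$ is constant. If instead the $x^{(j)}$ are all non-central, each is centralized in $M$ only by a proper subgroup, of order at most $|M|/2$; the product of these centralizers is the centralizer of $x$ in $N$, isomorphic to $\bar M^{k}$ where $\bar M$ is the centralizer of $x^{(1)}$. The centralizer $\cZ_x$ contains $\phi(B_{n-3})$, normalizes $N$, fixes $x$, and hence has the required shape, with normal subgroup $\cong\bar M^k$ (factors permuted by $\cZ_x$), solvable quotient, and measure $k|\bar M|\le k|M|/2$. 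Since $n-3\ge 3\log_2(k|M|/2)\ge 3\log_2(k|\bar M|)$, the induction hypothesis applies to $\phi|_{B_{n-3}}$; it is constant, so $\phi(x_1)=\cdots=\phi(x_{n-4})$ and $\phi$ is constant by Corollary~\ref{c:equal}.

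The main obstacle is the possibility, in Case B, that $x$ is central in $N$, so that passing to its centralizer produces no reduction: this is precisely where the proof of Theorem~\ref{t:rootbound} invoked Jordan decomposition together with the Springer--Steinberg and Alexeevski--Mizuno descriptions of centralizers. In the finite setting I replace that machinery by the elementary central-element argument above, at the cost of proving the statement for an \emph{arbitrary} finite group $M$ rather than only for simple $H$, since the centralizers arising in Case B are neither simple nor centerless. The remaining work is bookkeeping on the constant: one checks that each reduction preserves the invariant $n\ge 3\log_2(\text{measure})$ --- immediate in Case A (the measure drops while $n$ is unchanged) and exactly tight in Case B (both $n$ and $\log_2$ of the measure fall by the matching amount) --- and that $n\ge 6$ continues to hold at every step where the solvable or central base case is invoked, which is where the non-abelianness of $H$ and the precise constant in $n\ge 3\log_2 k|H|$ are used.
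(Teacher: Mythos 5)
Your proof follows the same skeleton as the paper's: reduce modulo $H^k$ using Proposition~\ref{p:solv} so that all $\phi(x_i)$ induce one permutation of the factors, split on whether that permutation is transitive, form $x=\phi(x_{n-2})^{-1}\phi(x_{n-1})$, pass to its centralizer (which contains $\phi(B_{n-3})$), and recurse with the ``size'' at least halving while $n$ drops by $3$ --- exactly the trade-off that produces the constant $3\log_2 k|H|$. Where you genuinely improve on the paper is in making the induction honest: the paper inducts on a statement it never writes down (its recursion replaces the simple group $H$ by a centralizer $H_1$, which is neither simple nor centerless, and its inequality $\log_2|H_1|\le\log_2|H|$ is too weak as written --- one needs the strict drop $|H_1|\le|H|/2$, which holds because a non-trivial element of a non-abelian simple group is non-central). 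You state the generalized hypothesis for an arbitrary finite group $M$, and you supply the case the paper silently skips: $x$ non-trivial but central in $N$, where no centralizer reduction is available and one instead observes that the image is metabelian (constant mod $Z(N)$, hence contained in an extension of a cyclic group by the abelian normal subgroup $Z(N)$) and applies Proposition~\ref{p:solv} directly. Your observation that a $k$-cycle forces all components of $x$ to be simultaneously central or non-central is also needed and correct. Two small points to tighten: in Case A you should quotient the subgroup of $G$ preserving the orbit decomposition by $N_i$ rather than writing $\phi(B_n)/N_i$, since $N_i$ need not lie in the image; and the terminal bookkeeping can bottom out at $n'=5$ (e.g.\ $k=1$, $M'\cong S_3$, centralizer of order $\le 3$), which is covered because the connectivity argument in Proposition~\ref{p:solv} in fact works for $n\ge 5$, but you should say so explicitly since the paper only states that proposition for $n\ge 6$. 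Neither issue is a real gap, and the paper's own proof is no more careful on either count.
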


\begin{proof}
If $|G|\le 5$, then $|G|$ is abelian, so without loss of generality we may assume $n\ge 6$.
Following the proof of Theorem~\ref{t:rootbound}, we may therefore assume
that $C\cong \Z/k\Z$ cyclically permutes the factors of $H^k$.  Given $\phi\colon B_n\to G$,
we let $x = \phi(x_{n-2})^{-1} \phi(x_{n-1})$, and let $Z_x$ denote the centralizer of $x$ in $G$.
As $x\in H^k$, we fix a factor on which the projection of $x$ is non-trivial, and let $H_1$ denote
the centralizer of this projection of $x$ in this factor.  If $Z_x$ does not map onto $C$, then there exist normal subgroups
$N_1,\ldots,N_t$ of $Z_x$ with $N_1\cap\cdots\cap N_t = \{1\}$ such that each $Z_x / N_i$
is contained in a group of the form $H^d\rtimes \Z/d\Z$, where $d$ is a proper divisor of
$k$, so $\log_2 d \le \log_2 k-1$.  By the induction hypothesis every homomorphism from
$B_{n-3}$ to $Z_x/N_i$ is constant, so $\phi|_{B_{n-3}}$ is constant, and as $n\ge 6$,
this means that $\phi(x_1) = \phi(x_2)$ and therefore that $\phi$ is constant.

If $Z_x$ does map onto $C$, then $Z_x$ is a subgroup of a group isomorphic to to
$H_1^k\rtimes C$, where $H_1$ is the centralizer of a non-trivial element of $H$.
As $\log_2 |H_1| \le \log_2 |H|$, the induction hypothesis implies that
$\phi|_{B_{n-3}}$ is constant and therefore that $\phi$ is constant.

\end{proof}

\section{Representations of Linearly Bounded Degree}

In this section, we examine the possible degrees of low-dimensional unitary representations of a braid group $B_n$.  The complex irreducible representations of degree $\le n$ of $B_n$ have been completely described \cite{FLSV,Sy}.  The constant representations have degree $1$, and the non-constant representations in this range have degree $n-2$, $n-1$, or $n$.  Sysoeva \cite{Sy} has announced that there are no irreducible representations of degree $n+1$ for $n$ sufficiently large, and has conjectured that such a statement holds for degree
$n+k$ as well.

In this section, we consider the irreducible unitary representations of $B_n$ of degree $\le ln$
where $l$ is a fixed integer and $n$ is sufficiently large in terms of $l$.

We say that a sequence $d_0,d_1,d_2,\ldots$
is \emph{weakly convex} if the sequence of differences $d_1-d_2,d_2-d_3,\ldots$ is non-increasing.

\begin{lemma}
\label{l:ineq}
If $d_0,d_1,\ldots$ is a weakly convex sequence and $i<j<k$, then there exists an integer $s$ such that
$$\frac{d_j-d_i}{j-i}\le s\le \frac{d_k-d_j}{k-j}.$$
\end{lemma}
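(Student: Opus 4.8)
The plan is to avoid searching for the integer $s$ and instead to produce an explicit witness, after recasting weak convexity in the more convenient language of \emph{forward differences}. Set $\delta_m := d_{m+1}-d_m$. The hypothesis that the backward differences $d_m-d_{m+1}$ are non-increasing is exactly the statement that the $\delta_m$ are non-decreasing, i.e. $\delta_m \le \delta_{m+1}$ for every $m$. This single monotonicity fact is what I would use throughout.

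First I would rewrite each of the two quotients as an average of consecutive forward differences. Telescoping gives $d_j-d_i=\sum_{m=i}^{j-1}\delta_m$ and $d_k-d_j=\sum_{m=j}^{k-1}\delta_m$, so that
$$\frac{d_j-d_i}{j-i}=\frac{1}{j-i}\sum_{m=i}^{j-1}\delta_m,\qquad \frac{d_k-d_j}{k-j}=\frac{1}{k-j}\sum_{m=j}^{k-1}\delta_m.$$
Since the $\delta_m$ are non-decreasing, the first average is at most its largest summand $\delta_{j-1}$, while the second average is at least its smallest summand $\delta_j$; combined with $\delta_{j-1}\le\delta_j$ (monotonicity once more) this yields the sandwich
$$\frac{d_j-d_i}{j-i}\ \le\ \delta_{j-1}\ \le\ \delta_j\ \le\ \frac{d_k-d_j}{k-j}.$$
Note that $\delta_{j-1}$ is legitimate here because $i<j$ forces $i\le j-1$, so $\delta_{j-1}$ is one of the averaged terms, and $j\ge 1$ makes $d_{j-1}$ meaningful.

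This pins the witness down: I would take $s=\delta_{j-1}=d_j-d_{j-1}$, which lies between the two quotients by the display above and is a genuine integer because the $d_m$ are integers (as they are in the intended application, where $d_m$ is a degree). I expect that the only point genuinely requiring attention is this integrality: for an arbitrary real weakly convex sequence the two quotients can coincide at a non-integer value — take $d_m=cm$ with $c\notin\Z$, where every $\delta_m=c$ — so no integer $s$ need exist, and the conclusion really does use that a forward difference of an integer sequence is again an integer. Beyond that, everything reduces to the elementary observation that the average of a monotone finite sequence lies between its extreme terms, so I anticipate no substantive obstacle.
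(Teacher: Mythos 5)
Your proof is correct and takes essentially the same approach as the paper, which simply sets $s=d_{j+1}-d_j$ and declares the result immediate; your witness $s=d_j-d_{j-1}$ differs only by a shift of one index, and the same averaging-of-monotone-differences argument justifies either choice. Your remark that the $d_m$ must be integers for the conclusion to hold as stated is a fair observation about the lemma's phrasing (the paper leaves this implicit), but it is harmless in the intended application, where the $d_m$ are dimensions.
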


\begin{proof}
Setting $s = d_{j+1}-d_j$, the lemma follows immediately.
\end{proof}

\begin{lemma}
\label{l:convex}
Let $V$ be a finite-dimensional vector space, $W\subset V$ a subspace, and $T\colon V\to V$ an
invertible linear transformation.  The sequence $d_0,d_1,d_2,\ldots$ defined by $d_0:=\dim V$ and
$$d_k := \dim W\cap T(W)\cap T^2(W)\cap\cdots\cap T^{k-1}(W),\ k\ge 1$$
is weakly convex.
\end{lemma}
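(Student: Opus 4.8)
The plan is to establish the discrete convexity inequality $d_k - d_{k+1} \geq d_{k+1} - d_{k+2}$ for every $k \geq 1$, which is precisely the assertion that the difference sequence $d_1-d_2,\, d_2-d_3,\,\ldots$ is non-increasing. Throughout, write $W_k := W \cap T(W) \cap \cdots \cap T^{k-1}(W) = \bigcap_{i=0}^{k-1} T^i(W)$ for $k \geq 1$, so that $d_k = \dim W_k$ and $W_1 = W$. The entire argument rests on reformulating each ``drop'' $d_k - d_{k+1}$ as the dimension of a space that manifestly shrinks as $k$ grows.

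First I would record the recursion $W_{k+1} = W \cap T(W_k)$. Since $T$ is invertible it commutes with intersections, so $T(W_k) = \bigcap_{i=1}^{k} T^i(W)$, and intersecting with $W = T^0(W)$ recovers $W_{k+1}$; in particular $\dim T(W_k) = d_k$. Next I would apply Grassmann's identity $\dim(A \cap B) = \dim A + \dim B - \dim(A+B)$ to $A = W$ and $B = T(W_k)$. This yields $d_{k+1} = \dim W + d_k - \dim(W + T(W_k))$, hence the clean formula
$$d_k - d_{k+1} = \dim\bigl(W + T(W_k)\bigr) - \dim W, \qquad k \geq 1.$$

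The conclusion is then immediate from monotonicity: since $W_{k+1} \subseteq W_k$ we have $T(W_{k+1}) \subseteq T(W_k)$ and therefore $W + T(W_{k+1}) \subseteq W + T(W_k)$, so the right-hand side above is non-increasing in $k$. Thus $d_{k+1} - d_{k+2} \leq d_k - d_{k+1}$, as required.

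The only real content is discovering the reformulation in the displayed formula; once it is in hand there is no genuine obstacle, and the remaining care is purely bookkeeping, namely checking that the recursion and the base case $k = 1$ (where $W_1 = W$ and $W_2 = W \cap T(W)$) behave correctly. The hypothesis that $T$ is invertible is used in exactly one place, to let $T$ pass through the intersection while preserving dimensions; if instead $T$ were merely injective the same argument would go through verbatim, but the displayed identity is what makes invertibility convenient. I expect the write-up to be short, with the main subtlety being to state clearly that weak convexity concerns only the differences from $d_1-d_2$ onward, so that the value $d_0 = \dim V$ plays no role in the inequality being proved.
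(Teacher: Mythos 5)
Your proof is correct, and it takes a genuinely different (though equally elementary) route from the paper's. You reformulate the drop via the Grassmann identity as $d_k-d_{k+1}=\dim\bigl(W+T(W_k)\bigr)-\dim W$ and then observe that the subspaces $W+T(W_k)$ are nested decreasingly, so the drops are non-increasing. The paper instead writes $d_k-d_{k+1}=\dim W_k/W_{k+1}$ and exhibits an explicit injection $W_{k+1}/W_{k+2}\to W_k/W_{k+1}$ induced by $T^{-1}$ (injective because $W_{k+1}\cap T(W_{k+1})=W_{k+2}$). The two arguments are essentially dual: yours trades the construction of a map between quotients for a single application of the dimension formula plus monotonicity, which is arguably more mechanical and leaves less to check; the paper's version makes the comparison between consecutive quotients visible as an actual linear map, which is the more structural statement (and would survive in settings with a length function but where one prefers to avoid invoking the modular law explicitly). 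Your closing remarks are also accurate: only the differences from $d_1-d_2$ onward are constrained by the definition of weak convexity, so $d_0=\dim V$ plays no role, and injectivity of $T$ is all that is really used (which on a finite-dimensional space is the same as invertibility in any case).
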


\begin{proof}
Define $W_0=V$, and
$$W_k := W\cap T(W)\cap T^2(W)\cap\cdots\cap T^{k-1}(W),\ k\ge 1.$$
Then
$$d_k - d_{k+1} = \dim W_k - \dim W_{k+1} = \dim W_k/W_{k+1}$$
As $T^{-1}$ maps to $W_{k+1}$ to $W_k$ and $W_{k+2}$ to $W_{k+1}$, it induces a map
$$W_{k+1}/W_{k+2}\to W_k/W_{k+1}.$$
As
$$W_{k+1}\cap T(W_{k+1}) = W_{k+2},$$
this linear transformation is injective, so
$$d_k - d_{k+1}\ge d_{k+1}-d_{k+2}.$$
\end{proof}

We apply this lemma in the following way.  Let $V$ be a finite-dimensional complex vector space endowed with a Hermitian inner product,
and $\phi\colon B_n\to U(V)$ an irreducible unitary representation.
For each $\lambda\in\C$, we define $W=W^\lambda$ to be the $\lambda$-eigenspace of
$\phi(x_1)$.  By Lemma~\ref{l:trans} there exists $y\in B_n$ such that $yx_i y^{-1} = x_{i+1}$
for $1\le i\le n-2$.  We set $T=\phi(y)$.  Now, $w\in W^\lambda$, if and only if
$$(\phi(x_1)-\lambda)(w) = 0.$$
For any $k$, this is equivalent to
$$(\phi(y^{1-k}x_k y^{k-1})-\lambda)(w)=0,$$
or to
$$(\phi(x_k)-\lambda)(\phi(y^{k-1})(w)) = 0.$$
Thus, the $\lambda$-eigenspace of $\phi(x_k)$ is $T^{k-1}(W^\lambda)$.

We say that an irreducible representation $\phi\colon B_n\to U(m)$ is of \emph{level $k$} if
one of the following is true:
\begin{enumerate}
\item $k=0$ and $m=1$.
\item $k\ge 1$ and $kn-(k^2+3k-2)\le m\le kn$.
\end{enumerate}

\begin{thm}
\label{t:degrees}
For every integer $l\ge 1$ and every integer $n$ sufficiently large in terms of $l$,
every irreducible unitary representation of the braid group $B_n$ of degree $\le ln$
is of some (unique) level $k\le l$.
\end{thm}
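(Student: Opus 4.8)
The plan is to treat existence and uniqueness of the level separately; uniqueness is essentially formal, while existence carries the content. Throughout I assume $\phi$ is non-constant and irreducible (otherwise $m=1$ and we are in level $0$), so $m=\dim V>1$.

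\emph{Setting up the convex filtration.} For each eigenvalue $\la$ of $\phi(x_1)$ let $W^\la$ be its eigenspace and take $T=\phi(y)$ for $y$ as in Lemma~\ref{l:trans}, so that $T^{j-1}W^\la$ is the $\la$-eigenspace of $\phi(x_j)$ and
$$d^\la_k:=\dim\bigcap_{j=1}^k T^{j-1}W^\la$$
is weakly convex by Lemma~\ref{l:convex}, with $d^\la_1=\dim W^\la$. The key vanishing is that $\bigcap_{j=1}^{n-1}T^{j-1}W^\la$ is precisely the subspace on which \emph{every} $\phi(x_j)$ acts by the scalar $\la$; this is a subrepresentation, so by irreducibility and $m>1$ it is $0$, giving $d^\la_{n-1}=0$. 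I then pass to the single sequence $D_i:=\sum_\la d^\la_{i+1}$. A sum of weakly convex sequences is weakly convex (the difference sequences add, and a sum of non-increasing sequences is non-increasing), so $(D_i)$ is weakly convex with $D_0=\sum_\la\dim W^\la=m$ and $D_{n-2}=0$. Writing $\Delta_i=D_i-D_{i+1}$, these are non-negative non-increasing integers summing to $m$, positive precisely for $i<N$, where $N\le n-2$ is the first index at which $D$ vanishes (once a drop is $0$ the sequence is constant, hence $0$). I define the level $k$ to be the eventual value of the drops $\Delta_i$, i.e.\ the smallest positive value they attain.

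\emph{The two bounds.} I would extract the window from Lemma~\ref{l:ineq} applied to $(D_i)$ with the triple $(0,j,n-2)$ for a bulk index $j$ (one where $\Delta_j=k$). The upper half of the sandwich gives $D_j\le\Delta_j(n-2-j)$, whence
$$m=\sum_{i<j}\Delta_i+D_j\le k(n-2)+\sum_{i<j}(\Delta_i-k),$$
so $m\le kn$ once the total excess $\sum_{i<j}(\Delta_i-k)$ of the early (transient) drops over $k$ is bounded by $2k$. The lower half gives $m\ge D_j+\Delta_j j=k(n-2)-\bigl(k(n-2)-kj-\sum_{i\ge j}\Delta_i\bigr)$, so $m\ge kn-(k^2+3k-2)$ once the tail shortfall (the deficit of the later drops below $k$, together with the effect of $N<n-2$) is bounded by $k^2+k-2$. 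These two estimates place $m$ in the level-$k$ window, and then $k\le l$ follows from $m\le ln$: the window bound $m\ge kn-(k^2+3k-2)$ combined with $m\le ln$ forces $(k-l)n\le k^2+3k-2$, impossible for $k>l$ once $n$ is large.

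\emph{Uniqueness and the main obstacle.} Uniqueness is a disjointness computation: consecutive windows $[kn-(k^2+3k-2),kn]$ and $[(k+1)n-((k+1)^2+3(k+1)-2),(k+1)n]$ are separated by a gap of length $n-(k^2+5k+2)$, which is positive for all $k\le l$ once $n>l^2+5l+2$; hence at most one window contains $m$. The real difficulty is the representation-theoretic input silently used in the bound step: pure convexity does \emph{not} exclude $m$ from a gap, since a non-increasing integer sequence summing to $m$ can have almost any profile. What must actually be proved is that the drops $\Delta_i$ are genuinely nearly constant — equivalently, that adjoining one further generator $\phi(x_{j+1})$ to the common $\la$-eigenspace of $\phi(x_1),\dots,\phi(x_j)$ decreases its dimension by an amount essentially independent of $j$ away from the two ends, a homogeneity reflecting the braid relations and unitarity. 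Converting this near-homogeneity into the sharp boundary budgets $2k$ (transient excess) and $k^2+k-2$ (tail shortfall), with tight control of the finitely many short-lived eigenvalue chains and of the two boundary regions, is where essentially all the work lies; Lemmas~\ref{l:convex} and~\ref{l:ineq} only supply the overall shape of the sequence.
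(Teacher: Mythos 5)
Your setup is sound as far as it goes: the identification of the $\la$-eigenspace of $\phi(x_j)$ with $T^{j-1}W^\la$, the weak convexity of the intersection dimensions, and the vanishing $d^\la_{n-1}=0$ (a common $\la$-eigenvector of all the $\phi(x_j)$ spans a one-dimensional subrepresentation) are all correct and are exactly the tools the paper uses. The uniqueness argument is also fine. But the proof has a genuine gap, which you yourself flag in your last paragraph: the assertions that the ``transient excess'' $\sum_{i<j}(\Delta_i-k)$ is at most $2k$ and that the ``tail shortfall'' is at most $k^2+k-2$ are never established, and they do not follow from convexity. Concretely, a non-increasing sequence of drops such as $\Delta_0=\cdots=\Delta_{\lfloor (n-2)/2\rfloor-1}=2$, $\Delta_{\lfloor (n-2)/2\rfloor}=\cdots=\Delta_{n-3}=1$ is perfectly consistent with everything you have proved about $(D_i)$, yet it gives $m\approx 3n/2$, squarely in the gap between levels $1$ and $2$. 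Ruling out such profiles is the entire content of the theorem, so what you have written is a framework plus an accurate description of what remains to be done, not a proof.

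The missing representation-theoretic input in the paper is supplied by a completely different mechanism: induction on $l$, with the base case $l=1$ imported from the classification of Formanek--Lee--Sysoeva--Vazirani. Assuming $\dim V$ lies in the forbidden gap, one decomposes $V$ into eigenspaces $X^\mu$ of $\phi(x_{n-1})$, on which the subgroup $B_{n-2}$ acts; the eigenvalue/convexity estimates you have set up are used only to show that the sum $X$ of all one-dimensional $B_{n-2}$-constituents has $\dim X\le 2l$, so that almost all of $V$ sits in non-split pieces $X^{\mu_i}_{ns}$ of dimension $<(l-1)(n-2)$. The induction hypothesis then assigns each irreducible factor a level $k_i$ for $B_{n-2}$, and the arithmetic of the window widths (convexity of $x^2+3x-2$ in the multi-eigenvalue case, a separate $\lfloor\cdot\rfloor$/$\lceil\cdot\rceil$ estimate via Lemma~\ref{l:ineq} when only one $\mu$ is non-split) forces $\dim V$ back into a window, contradicting the gap assumption. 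In other words, the near-homogeneity of the drops that you correctly identify as the crux is not proved directly; it is inherited from the already-classified lower levels of the smaller braid group. If you want to salvage your approach, you should restructure it as this induction rather than trying to control the profile of $(\Delta_i)$ in isolation.
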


\begin{proof}
As
$$(k-1)n < kn-(k^2+3k-2)$$
when $n$ is sufficiently large, uniqueness is clear.
For existence, we use induction on $l$, the $l=1$ case being known \cite{FLSV}.
For given $l\ge 2$, let $\phi\colon B_n\to \Aut(V)$ be an irreducible unitary representation of degree
$\le ln$.  We may therefore assume that
\begin{equation}
\label{range}
(l-1)n+1\le \dim V \le ln - (l^2+3l-1)
\end{equation}
We write $B_{n-1}$ and $B_{n-2}$ for the subgroups of $B_n$ generated
by $x_i$ with $1\le i\le n-2$ and $1\le i\le n-3$ respectively.

For each eigenvalue $\mu$ of $\phi(x_{n-1})$, let $X^\mu$ denote the $\mu$-eigenspace.
As $B_{n-2}$ commutes with $x_{n-1}$, $\phi(B_{n-2})$ acts on $X^\mu$.
We say that $X^\mu$ \emph{splits} if it is a direct sum of constant representations of $B_{n-2}$.
A sufficient condition that $X^\mu$ splits is
$$\dim X^\mu \le n-5,$$
as the minimum degree of a non-constant representation of $B_{n-2}$ is $n-4$.  Let $X$ denote the direct sum of
all irreducible $1$-dimension factors of $B_{n-2}$ in $V$, so $X$ contains the sum of
all split $X^\mu$.
Let  $\lambda_1,\ldots,\lambda_r$ be the constants appearing in $X$ regarded as a $B_{n-2}$-representation, and  let $W^{\lambda_i}$ denote the
$\lambda_i$-eigenspace of $\phi(x_1)$ on $V$, which of course contains the
$\lambda_i$-eigenspace of $\phi(x_1)$ on $X$.
Thus $W_j^{\lambda_i}$ is the intersection of the $\lambda_i$-eigenspaces of
$\phi(x_1),\ldots,\phi(x_j)$.  As $W_{n-1}^{\lambda_i} = \{0\}$, Lemma~\ref{l:convex} implies
$$\dim W_j^{\lambda_i} \ge \frac{n-1-j}{2} \dim W_{n-3}^{\lambda_i},$$
for $1\le j\le n-3$.  If $\dim X\ge 2l+1$,
$$ln \ge \sum_i \dim W_1^{\lambda_i} \ge \frac{n-2}{2}\dim X\ge  ln + (n/2-2l-1),$$
Assuming $n > 4l+2$, we may therefore conclude that $\dim X\le 2l$.

We consider first the case that there are at least two different eigenvalues $\mu_i$ such that
$X^{\mu_i}$ does not split.  For each $\mu\in\{\mu_1,\ldots,\mu_r\}$, let $X^\mu_{ns}$
denote the orthogonal
complement in $X^\mu$ of the direct sum of all constant representations of $B_{n-2}$.
Then
\begin{align*}
\dim V - \dim X^\mu_{ns} &\le ln - (l^2+3l-1) - (n-4) \\
&= (l-1)(n-2) - (l^2+l-3) \\
&< (l-1)(n-2),
\end{align*}
so $\bigoplus_{\mu_i\neq\mu} X^{\mu_i}_{ns}$
satisfies the induction hypothesis for representations of $B_{n-2}$, and the same is true of each
irreducible factor of each $X^{\mu_i}_{ns}$.  Each irreducible factor of $X^{\mu_i}_{ns}$ therefore has a level.
Letting $k_1,k_2,\ldots,k_s\ge 1$ denote the sequence of levels, we have
$$\dim V = \dim X + \sum_{i=1}^s \dim X^{\mu_i}_{ns},$$
so
$$(k_1+\cdots+k_s)(n-2) - \sum_{i=1}^s (k_i^2+3k_i-2) \le \dim V \le 2l + (k_1+\cdots+k_s)(n-2)$$
For $n$ sufficiently large in terms of $l$, this, together with (\ref{range}) implies
$k_1+\cdots+k_s = l$.   As $x^2+3x-2$ is convex, for any fixed values of $s\ge 2$
and $l$, the sum of $k_i^2+3k_i-2$ is minimized, subject to the constraints $k_i\ge 1$ and
$k_1+\cdots+k_s=l$, when all but one value of $k_i$ is $1$.
As the difference between values of $x^2+3x-2$ for consecutive positive integers
exceeds the value at $x=1$, if $s$ is constrained to be greater than $1$ but otherwise
can be chosen freely, the sum of $k_i^2+3k_i-2$ is maximized when $s=2$.  Thus,
\begin{align*}
\dim V &\ge (k_1+\cdots+k_s)(n-2) - \sum_{i=1}^s (k_i^2+3k_i-2) \\
&\ge ln - 2l - (l-1)^2 - 3(l-1)+2 - 2 \\
& = ln - (l^2+3l-2).
\end{align*}

This leaves the case that there exists a unique $\mu$ such that $X^\mu_{ns}$ is not zero.
Let $X^\mu_i$ denote the intersection of the $\mu$-eigenspaces of
$x_{n-1},x_{n-2},\ldots,x_{n-i}$.  By Lemma~\ref{l:convex}, applying Lemma~\ref{l:ineq}
for $0<i<j$,
$$\dim X^\mu_i - \dim X^\mu_j \le (j-i) \biggl\lfloor\frac{\dim V - \dim X^\mu_i}i\biggr\rfloor.$$
If
$$\dim V - \dim X^\mu_l < l^2,$$
then setting $j=n-1$ and $i=l$, we have
\begin{align*}
\dim V &\le l^2-1+\dim X_l^\mu\le l^2-1 + \dim X_l^\mu - \dim X_{n-1}^\mu \\
&\le l^2-1 +(n-l-1)\biggl\lfloor\frac{l^2-1}l\biggr\rfloor \\
&=(l-1)n,
\end{align*}
which for $n$ sufficiently large is inconsistent with (\ref{range}).
On the other hand,
$$\dim V - \dim X^\mu \le 2l,$$
so
$$\dim V - \dim X^\mu_l \le 2l^2.$$
Assuming that $2l^2\le n-l-6$, this implies that the orthogonal complement of
$X^\mu_l$ is a split representation of $B_{n-l-1}$, the subgroup of $B_n$ generated by
$x_1,\ldots,x_{n-l-2}$.

Let $\lambda_i$ denote the eigenvalues of this representation.
We have
$$\sum_i \dim W^{\lambda_i}_{n-l-2} \ge l^2.$$
On the other hand, $\dim W^{\lambda_i}_{n-1} = 0$.  By Lemmas \ref{l:ineq} and \ref{l:convex},
$$\dim W^{\lambda_i}_1 - \dim W^{\lambda_i}_{n-l-2}
\ge (n-l-3)\biggl\lceil\frac{\dim W^{\lambda_i}_{n-l-2}}{l+1}\biggr\rceil.$$
As  $\lceil x/(l+1)\rceil$ is superadditive in $x$ and $\lceil l^2/(l+1)\rceil = l$,
\begin{align*}
\sum_i \dim W^{\lambda_i}_1&\ge \sum_i \dim W^{\lambda_i}_{n-l-2}
	+ (n-l-3)\biggl\lceil\frac{\dim W^{\lambda_i}_{n-l-2}}{l+1}\biggr\rceil \\
&\ge l^2 + (n-l-3)l = nl - 3l,
\end{align*}
contrary to (\ref{range}).
\end{proof}

In particular by the proof of Theorem~\ref{t:degrees} we see that $B_n$ has no irreducible $(n+1)$-dimensional unitary representations for $n\geq 16$.  The actual lower bound is at least $8$ as $B_7$ has irreducible $8$-dimensional unitary representations (factoring over the Hecke algebra $H(i,7)$, see \cite{J1}).

Theorem~\ref{t:degrees} can be extended to projective unitary representations.  In fact, we have the following proposition:

\begin{prop}
\label{p:proj}
Every irreducible projective unitary representation of $B_n$ of degree $d\le 2^{n/6}$ lifts to a linear representation of $B_n$.
\end{prop}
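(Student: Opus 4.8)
The goal is to lift an irreducible projective unitary representation $\phi\colon B_n\to PU(d)$ to a linear one when $d\le 2^{n/6}$. A projective representation is the same as a homomorphism $B_n\to PU(d)$, and a lift to $U(d)$ exists precisely when the obstruction in $H^2(B_n,U(1))$ (pulled back from the central extension $1\to U(1)\to U(d)\to PU(d)\to 1$) vanishes. My plan is to reduce this cohomological obstruction to the purely group-theoretic lifting criterion already established in Proposition~\ref{p:lift}, which says that a projective representation into a central extension lifts as soon as $[\phi(x_i),\phi(x_j)]^\sim=1$ for a \emph{single} pair $i,j$ with $|i-j|\ge 2$.

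The first step is to exploit the size hypothesis $d\le 2^{n/6}$ together with Theorem~\ref{t:sporadic}. The central extension $1\to U(1)\to U(d)\to PU(d)\to 1$ is central, so by Proposition~\ref{p:lift} it suffices to find commuting images of two far-apart generators in the genuine group $U(d)$. The obstruction to this is that the commutators $[\phi(x_i),\phi(x_j)]^\sim$ live in the center $U(1)$; they are scalars, and they are all conjugate (hence equal) by the argument in Proposition~\ref{p:lift} once we know they lie in $A=U(1)$. So what must be shown is that this common scalar is trivial. Concretely, I would set $x=\phi(x_1)$, $y=\phi(x_3)$ (lifted arbitrarily to $U(d)$), form the scalar $c=[x,y]^\sim\in U(1)$, and argue that $c=1$.

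The key step, where I expect the real work to lie, is to control this scalar using finiteness. The natural route is to pass to a finite quotient: the scalar commutator $c$ is a root of unity (or can be forced to be, after noting that the relevant braid relations constrain $c$ to satisfy $c^N=1$ for some $N$ depending only on combinatorial data, not on $d$). I would then consider the finite group generated by the images of the $g_i$ in Proposition~\ref{p:lift} modulo an appropriate finite central subgroup, realizing $\phi$ as landing in a group of the form $H^k\rtimes C$ to which Theorem~\ref{t:sporadic} applies. The bound $n\ge 3\log_2 k|H|$ is exactly calibrated so that $d\le 2^{n/6}$ (equivalently $n\ge 6\log_2 d$) forces the relevant finite image to be small enough that every homomorphism $B_n\to G$ is constant; a constant projective representation certainly lifts, and more usefully, constancy of the associated quotient representation forces the commutator scalar $c$ to be trivial.

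The main obstacle will be translating between the analytic setting (arbitrary unitary groups, scalars in the continuous group $U(1)$) and the finite-group input of Theorem~\ref{t:sporadic}: one must produce a genuinely \emph{finite} group into which the obstruction-carrying data maps, with the parameters $k$ and $|H|$ bounded in terms of $d$ so that the hypothesis $n\ge 3\log_2 k|H|$ is implied by $d\le 2^{n/6}$. I anticipate that the cleanest way to do this is to observe that the scalar $c$ lies in a finitely generated subgroup of $U(1)$ determined by finitely many entries, and that its order (or the order of the finite image it generates) is bounded by a function of $d$ of the right exponential type; once $c$ is forced to be a root of unity of controlled order, Theorem~\ref{t:sporadic} or Proposition~\ref{p:solv} applied to a suitable finite quotient forces $c=1$, and Proposition~\ref{p:lift} completes the lift.
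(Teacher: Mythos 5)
Your opening reduction is the right one and matches the paper's first step: by Lemma~\ref{l:pairs} all the commutators $[\phi(x_i),\phi(x_j)]^\sim$ with $|i-j|\ge 2$ are conjugate, hence (being central scalars) equal to a single $c\in U(1)$, and by Proposition~\ref{p:lift} the lift exists once $c=1$. You even have more in hand than you claim: since the common value is both symmetric and antisymmetric in $(i,j)$, it satisfies $c^2=1$, so $c=\pm 1$ automatically --- there is nothing to ``force'' about $c$ being a root of unity. The genuine gap is in how you propose to rule out $c=-1$. There is no finite group anywhere in this problem: the image of $\phi$ in $\PSU(d)$ is an arbitrary, typically infinite subgroup, so there is no quotient of the form $H^k\rtimes C$ with $H$ simple to which Theorem~\ref{t:sporadic} could apply; and the only finite group naturally generated by the obstruction data (lifts of commuting generators together with the scalar $-1$) is an extraspecial $2$-group, which is nilpotent and therefore invisible to both Theorem~\ref{t:sporadic} and Proposition~\ref{p:solv}, and which presents no group-theoretic contradiction at all. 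The hypothesis $d\le 2^{n/6}$ cannot enter through a constancy theorem.

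It enters through a dimension count, which is the step missing from your outline. Suppose $c=-1$. Choose $2m$ odd indices $a_1<\cdots<a_{2m}<n$ with $m\ge n/6$ and set $y_i=x_{a_i}$; these pairwise commute in $B_n$, yet any two lifts of $\phi(y_i),\phi(y_j)$ to $U(d)$ commute only up to $-1$. The elements $a_i=y_1\cdots y_{2i-1}$ and $b_i=y_1\cdots y_{2i-2}y_{2i}$ then satisfy $[a_i,a_j]^\sim=[b_i,b_j]^\sim=1$ and $[a_i,b_j]^\sim=(-1)^{\delta_{ij}}$, giving $m$ pairwise commuting subgroups $G_i=\pi^{-1}(\phi(\langle a_i,b_i\rangle))\subset U(d)$, each nonabelian with the central scalar $-1$ in its commutator subgroup and hence with no one-dimensional constituent in $\C^d$. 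The resulting homomorphism $G_1\times\cdots\times G_m\to U(d)$ decomposes $\C^d$ into external tensor products of representations of the $G_i$, each factor of degree at least $2$, forcing $d\ge 2^m$ and contradicting the degree bound. Your proposal never reaches this representation-theoretic step, and the finite-quotient route sketched in its place does not close.
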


\begin{proof}
The proposition is trivial for $n\le 5$.  We may therefore assume $n\ge 6$.
Thus there exists a sequence $a_1 < \cdots < a_{2m}$
of positive odd integers less than $n$, with $m\ge n/6$.  Let $y_i = x_{a_i}$.  The generators
$y_i$ commute with one another.  The central extension
$$0\to U(1)\to U(d)\overset{\pi}{\to} \PSU(d)\to 0$$
defines a commutator map $[\ ]^\sim$.  By Lemma~\ref{l:pairs}, $[\phi(x_i),\phi(x_j)]^\sim$
is independent of the pair $(i,j)$ provided $|i-j|\ge 2$.  It is therefore symmetric as well as antisymmetric and consequently takes values $\pm 1$.  If $[\phi(x_i),\phi(x_j)]^\sim = 1$ for some
(and therefore all) $(i,j)$ with $|i-j|\ge 2$, then by Proposition~\ref{l:cmodc}, $\phi$ lifts to
a homomorphism to $U(m)$.

We therefore assume that $[\phi(y_i),\phi(y_j)]^\sim = -1$ for all $i\neq j$.  Let
$$a_i = y_1 y_2\cdots y_{2i-1},\ b_i = y_1 y_2 \cdots y_{2i-2} y_{2i}.$$
Then
$$[a_i,a_j]^\sim = [b_i,b_j]^\sim = 1,\ [a_i,b_j]^\sim = (-1)^{\delta_{ij}}.$$
Let
$$G_i:= \pi^{-1}(\phi(\langle a_i,b_i\rangle)).$$
Clearly, the restriction of the standard representation of $U(m)$ to $G_i$
has no $1$-dimensional components.
The subgroups $G_1,\ldots,G_m\subset U(d)$ commute in pairs and give rise to a homomorphism
$G_1\times\cdots G_m\to U(d)$.  The restriction of the standard representation of $U(m)$
to this product decomposes as a sum of irreducible representations of
$G_1\times\cdots\times G_m$, each of which is an external tensor product of representations
of the $G_i$, each of degree $>1$.  Therefore, $d\ge 2^m$.
\end{proof}

\section{Representations of Exponentially Bounded Degree}

In this section we fix a constant $c$ and consider non-constant unitary representations of $B_n$,
$n\ge 6$, of degree $d \le c^n$ with finite image.   We are interested in the behavior of
$G:=\rho(B_n)$.  By Proposition~\ref{p:solv}, $G$ cannot be solvable.

\begin{defn}
A finite group $G$ is \emph{almost characteristically simple} if there exists a
(non-abelian) finite simple group $H$ and a positive integer $k$ such that $H^k<G<\Aut(H^k)$.  We say $G$ is \emph{of permutation type} if $H$ is isomorphic to the alternating group $A_n$ for some
$n\ge 5$.
\end{defn}

\begin{prop}
If $G$ is any finite group which is not solvable and $K$ is maximal among normal subgroups of $G$ such that
$G/K$ is not solvable, then $G/K$ is almost characteristically simple.
\end{prop}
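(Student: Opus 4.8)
The plan is to pass to the quotient $\overline{G} := G/K$ and recast the maximality of $K$ as an internal condition on $\overline{G}$. By the definition of $K$, the group $\overline{G}$ is not solvable, whereas for every normal subgroup $N$ of $G$ properly containing $K$ the quotient $G/N$ is solvable. Since normal subgroups of $\overline{G}$ correspond to normal subgroups of $G$ containing $K$, this translates precisely into the statement that $\overline{G}$ is not solvable but $\overline{G}/\overline{N}$ is solvable for every nontrivial normal subgroup $\overline{N}\trianglelefteq\overline{G}$. This reformulation is the conceptual key; the rest of the argument is the standard analysis of a minimal normal subgroup.

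First I would choose a minimal normal subgroup $M$ of $\overline{G}$. By the usual structure theorem for finite groups, a minimal normal subgroup is characteristically simple and hence a direct product $M\cong H^k$ of isomorphic simple groups. If $H$ were abelian, then $M$ would be solvable; combined with the fact that $\overline{G}/M$ is solvable (by the property above, as $M\neq 1$), this would exhibit $\overline{G}$ as an extension of a solvable group by a solvable group, forcing $\overline{G}$ to be solvable, a contradiction. Hence $H$ is a non-abelian finite simple group.

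The essential step is to show that the centralizer $C:=C_{\overline{G}}(M)$ is trivial. It is a normal subgroup of $\overline{G}$, and $M\cap C=Z(M)=1$ because $Z(H^k)=1$ when $H$ is non-abelian. If $C$ were nontrivial, then $\overline{G}/C$ would be solvable by the reformulated property; but the second isomorphism theorem gives $MC/C\cong M/(M\cap C)\cong M$, so $M\cong H^k$ would embed in the solvable group $\overline{G}/C$ and thus be solvable, which is false. Therefore $C=1$.

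Finally, the conjugation action $\overline{G}\to\Aut(M)=\Aut(H^k)$ has kernel $C=1$, so $\overline{G}$ embeds in $\Aut(H^k)$ while $H^k=M\trianglelefteq\overline{G}$; that is, $H^k\trianglelefteq G/K\hookrightarrow\Aut(H^k)$, which is exactly the assertion that $G/K$ is almost characteristically simple. (Uniqueness of $M$ comes for free: two distinct minimal normal subgroups would intersect trivially and hence centralize one another, contradicting $C=1$.) The only delicate point in the whole argument is the centralizer computation; once the maximality of $K$ has been rephrased as ``every proper quotient of $\overline{G}$ is solvable,'' each of the remaining steps is routine.
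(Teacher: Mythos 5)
Your proof is correct and follows essentially the same route as the paper: pass to $G/K$, observe that every nontrivial quotient is solvable, take a characteristically simple (minimal) normal subgroup $H^k$ with $H$ non-abelian, and show the centralizer of $H^k$ is trivial because the quotient by it contains an isomorphic copy of $H^k$ and hence cannot be solvable. The only cosmetic difference is that you argue $H$ is non-abelian directly via the extension $1\to M\to\overline{G}\to\overline{G}/M\to 1$, while the paper first notes that $\overline{G}$ has no nontrivial normal abelian subgroup; these are the same observation.
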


\begin{proof}
Replacing $G$ by $G/K$, we may assume that $G$ is not solvable but every non-trivial quotient group
of $G$ is.
In particular, $G$ has no non-trivial normal abelian subgroup.  Let $H^k$ denote a characteristically simple normal subgroup of $G$.  Thus, $H$ is non-abelian.  Let $K$ denote the centralizer
of $H^k$ in $G$.  Then $K$ is a normal subgroup.  The quotient $G/K$ is not solvable because it
contains the homomorphic image of $H^k$, which is isomorphic to $H^k$ itself since the
center of $H$ is trivial.
It follows that $G/K$ is trivial and therefore that the action of $G$ on $H^k$ by conjugation is faithful, i.e.,
$H^k \subset G\subset \Aut(H^k)$.
\end{proof}

\begin{defn}
If $G$ is a finite group which is not solvable, a \emph{minimal quotient} is any group of the form $G/K$ where $K$ is maximal among normal subgroups of $G$ such that $G/K$ is not solvable.
\end{defn}

\begin{defn}
A finite group is of \emph{classical type of rank $r$} if it is a finite simple group of the form $A_r(q)$, $^2A_r(q)$,$B_r(q)$, $C_r(q)$, $D_r(q)$, or $^2 D_r(q)$
\end{defn}

Roughly speaking, a finite simple group is of classical type if
it is a linear, unitary, orthogonal, or symplectic group over a finite field.

\begin{thm}
\label{t:images}
For every constant $c$ there exist positive constants $A$, $B$, $K$, $N$, and $Q$ such that for all
$n>N$ and all $\rho\colon B_n\to U(d)$ with $d\le c^n$ and finite image $G$,
 every minimal quotient of $G$
is either of permutation type or of the form $H^k\rtimes \Z/m\Z$, where $H$ is a finite simple group of
classical type of rank $r$.  In the latter case, $1\le k\le K$, $2\le q\le Q$, and $An\le r\le Bn$.
\end{thm}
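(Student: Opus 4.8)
The plan is to pass to a single minimal quotient and then pit two lower bounds, one from Theorem~\ref{t:sporadic} and one from Theorem~\ref{t:rootbound}, against the upper bound $d\le c^{n}$. Fix $\rho$ with image $G$; since $\rho$ is non-constant and $n\ge 6$, Proposition~\ref{p:solv} shows $G$ is not solvable, so minimal quotients exist. Let $Q$ be one. By the proposition identifying minimal quotients as almost characteristically simple we have $H^{k}\le Q\le\Aut(H^{k})$ for a non-abelian finite simple $H$, and the composite $\bar\rho\colon B_{n}\to Q$ is surjective and, being onto a non-solvable group, non-constant. Since $Q$ is a minimal quotient every proper quotient of $Q$ is solvable; in particular $Q/H^{k}$ is solvable and $H^{k}$ is the unique minimal normal subgroup, whose $k$ factors $Q$ permutes transitively. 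Composing $\bar\rho$ with $Q\to Q/H^{k}\to S_{k}$ and invoking Proposition~\ref{p:solv} again forces all $\bar\rho(x_{i})$ to have the same image in $S_{k}$; transitivity then makes this common image a $k$-cycle, so the permutation part of $Q$ is cyclic, consistent with the asserted shape $H^{k}\rtimes\Z/m\Z$. By the classification of finite simple groups $H$ is alternating (whence $Q$ is of permutation type and we are done), or sporadic, or of exceptional or classical Lie type; it remains to exclude the sporadic and exceptional cases and to produce the bounds in the classical case.

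First I would record two lower bounds. Applying Theorem~\ref{t:sporadic} to $Q=H^{k}\rtimes(\text{solvable})$ shows $\bar\rho$ cannot be constant unless $n\ge 3\log_{2}k|H|$, so $k|H|>2^{n/3}$. When $H$ is of Lie type I would realize it, up to the solvable decorations of diagonal, field and graph automorphisms and the cyclic factor permutation, through the $\F_{q}$-points of a connected adjoint simple algebraic group $\cH$, and apply Theorem~\ref{t:rootbound} to $\cH^{k}$, whose component group is solvable. As $\dim\cH^{k}=k\dim\cH$, non-constancy of $\bar\rho$ forces $k\dim\cH\gtrsim n^{2}$: for classical $H$ of rank $r$, where $\dim\cH$ has order $r^{2}$, this reads $r^{2}k\gtrsim n^{2}$, while for exceptional $H$, where $\dim\cH$ is bounded, it forces $k\gtrsim n^{2}$.

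The degree $d$ enters through the fine structure of $\rho$, and I would split on whether $\rho$ is primitive. In the primitive case Aschbacher's classification leaves two possibilities: either $H$ is the simple quotient of a component of $G$, or $H$ is classical and comes from a classical geometry on $V$ (for example the normalizer of an extraspecial group), in which case $k=1$ and $d\ge q^{\Omega(r)}$ directly. In the component case the $\ge k$ transitively permuted components covering $H$ each contribute an external tensor factor, so $d\ge m(H)^{k}$, where $m(H)$ is the least degree of a nontrivial projective representation of $H$; the Landazuri--Seitz--Zalesskii bounds give $m(H)\ge q^{\Omega(r)}$ in this cross-characteristic setting, hence $\log_{2}d\gtrsim kr\log_{2}q$. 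Since sporadic and exceptional $H$ can occur only in the component case, there $k|H|>2^{n/3}$ and $k\gtrsim n^{2}$ respectively make $d$ exceed $c^{n}$, excluding both. In the classical case the inequality $kr\log_{2}q\lesssim n$ from the degree, combined with $r^{2}k\gtrsim n^{2}$ from Theorem~\ref{t:rootbound}, simultaneously yields $An\le r\le Bn$, $q\le Q$ and $k\le K$, with constants depending only on $c$.

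The main obstacle is the imprimitive case, where $\rho$ is induced and $G\le U(d/k)\wr S_{k}$, the $k$ blocks being permuted through the cyclic image found above. Here $H$ may appear as $k$ transitively permuted factors \emph{without} any tensor structure, so only the far weaker bound $d\ge k\,m(H)$ is apparent, and this neither bounds $k$ nor excludes sporadic or exceptional $H$. To handle it I would descend: the block permutation $B_{n}\to S_{k}$ is cyclic, generated by the common image $\sigma$ of the $x_{i}$, so I would restrict $\rho$ to the finite-index subgroup $P=\ker(B_{n}\to\langle\sigma\rangle)$, on which $\rho$ takes values in the base $H^{k}\le U(d)$, and analyze the cocycle-type relations forced on the component coordinates of the $\rho(x_{i})$ by the braid relations, together with the centralizer-descent engine of the proof of Theorem~\ref{t:rootbound} applied to $x=\rho(x_{n-2})^{-1}\rho(x_{n-1})$. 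The aim is to show that an imprimitive configuration either collapses $\bar\rho$ to a constant or reduces, on a block, to the primitive case already treated, thereby inheriting its bounds. This reduction is where I expect essentially all the difficulty to lie.
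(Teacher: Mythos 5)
Your global architecture matches the paper's: pass to a minimal quotient $H^k\rtimes C$ with $C$ solvable (and cyclic, for essentially the reason you give), use Theorem~\ref{t:sporadic} and Theorem~\ref{t:rootbound} as ``non-constancy forces largeness'' lower bounds, and play the Landazuri--Seitz minimal-degree bound against $d\le c^n$ to control $q$, $k$, and $r/n$. The genuine gap is the one you flag yourself: the imprimitive case is never closed. Your lower bound $d\ge m(H)^k$, and with it the exclusion of sporadic and exceptional $H$ (which in your scheme rests on $d\ge 2^k$ with $k$ huge), is derived only in the primitive/component branch of Aschbacher's theorem; in the induced case you concede that only $d\ge k\,m(H)$ is visible, which bounds nothing, and the proposed descent to $\ker(B_n\to\langle\sigma\rangle)$ together with ``cocycle-type relations'' is a statement of intent, not an argument. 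As written, the proof establishes the theorem only under an unproved primitivity hypothesis.

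The paper avoids the primitive/imprimitive dichotomy altogether, and this is the idea your proposal is missing. Let $G_0$ be the preimage of $H^k$ in $G$, giving $0\to J\to G_0\to H^k\to 0$, and pull this back along the universal central extension $\cH(\F)^k\to H^k$, where $\cH$ is simply connected with $\cH(\F)$ the universal central extension of $H$, to obtain $0\to J\to \tilde G_0\to \cH(\F)^k\to 0$. One checks that $Z(J)$ is central in $\tilde G_0$: if $\cH(\F)^k$ permuted the characters of $Z(J)$ occurring in $V$ nontrivially, the resulting permutation representation inside $V$ would already contain a nontrivial constituent of $\cH(\F)^k$ and hence force the degree bound; otherwise $\cH(\F)^k$ fixes a subgroup and quotient of $Z(\tilde G_0)\subset Z(J)$ pointwise and, being perfect, acts trivially. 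Since $\cH(\F)^k$ is centrally closed, $H^2(\cH(\F)^k,Z(J))$ vanishes, the sequence splits, and $U(d)$ contains an honest copy of $\cH(\F)^k$ modulo center. The Landazuri--Seitz bound then applies to this faithful $d$-dimensional representation with no primitivity hypothesis, yielding the bound the paper states and making the Aschbacher case division (and in particular the unresolved imprimitive branch) unnecessary. If you want to salvage your route, you would need either to carry out this splitting argument or to genuinely execute the block-descent you sketch; at present neither is done.
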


\begin{proof}
A minimal quotient is of the form $H^k\rtimes C$, where $C$ is solvable and $H$ is simple.   By hypothesis, $H$ is not an alternating group.  By Theorem~\ref{t:sporadic}, if $n$ is sufficiently large,
then $|H|$ can be taken to be as large as we wish; in particular, we exclude that case that $H$ is sporadic.  By Theorem~\ref{t:rootbound}, if $n$ is sufficiently large and $H$ is of Lie type, the dimension of the underlying simple algebraic group must be $>\epsilon n^2$ for some absolute constant
$\epsilon>0$, so the rank $r$ of the group must be greater than $A n$
for some absolute constant $A>0$.
Thus, we may assume that $H$ is a perfect group whose universal central extension is $\cH(\F)$,
where $\cH$ is a simply connected semisimple algebraic group over $\F$ which is absolutely simple modulo its center and of rank $r\ge 9$.

Let $G_0$ denote the inverse image of $H^k\subset H^k\rtimes C$ in $G$.
We have a short exact sequence
$$0\to J\to G_0\to H^k\to 0,$$
which we pull back to a short exact sequence
\begin{equation}
\label{split}
0\to J\to \tilde G_0\to \cH(\F)^k\to 0.
\end{equation}
As $\tilde G_0$ is a central extension of $G_0$, the faithful representation $G_0\to U(d)$
gives rise to an almost faithful $d$-dimensional representation of $\tilde G_0$.  We claim that this implies that $d$ is greater than or equal to the degree of the minimal non-trivial representation
of $\cH(\F)$.  Let $X\subset \Hom(Z(J),\C^\times)$ denote
the set of characters obtained by restricting $\tilde G_0\to U(d)$ to the abelian group $Z(J)$.
Thus $\cH(\F)^k$ acts on $X$.  If this action is non-trivial, then the permutation representation of
$\cH(\F)^k$ acting on $X$ is non-trivial and therefore contains a non-trivial factor.  The minimal degree for a non-trivial representation of $\cH(\F)^k$ is the same as that for $\cH(\F)$.  We may therefore assume
that $\cH(\F)^k$ acts trivially on $X$.  This implies that the action of $\cH(\F)^k$ on $Z(J)$
preserves both $Z(\tilde G_0)\subset Z(J)$ and $Z(\tilde G_0) / Z(J)$ pointwise.  As
$\cH(\F)^k$ is perfect, any action of this group on an abelian group which fixes a subgroup and quotient group pointwise is trivial.  It follows that
$Z(J)$ lies in the center of $\tilde G_0$.  The non-abelian cohomology class which determines whether
(\ref{split}) splits lies in $H^2(\cH(\F)^k,J)$, which is a principal homogeneous space of
$H^2(\cH(K)^k,Z(J))$.  The latter is trivial since $\cH(K)^k$ is centrally closed.  Therefore,
$G_0$ contains a subgroup isomorphic to $\cH(\F)^k$, and restricting $V$ to this subgroup, we see that our claim holds.

The Seitz-Landazuri bound \cite{LS} on the minimal degree projective representations of finite simple groups of Lie types now implies that $q^{k r/n}$ is bounded in terms of $c$.  Given that
$r/n > A$, this gives upper bounds $Q$ and $K$ for $q$ and $k$, and given that $q\ge 2$, $k\ge 1$, this gives an upper bound $B$ for $r/n$.

\end{proof}

We remark that the theorem can be extended in two ways without essentially modifying the proof.
On the one hand, we need not assume that the representation $V$ is unitary.  On the other hand, if $V$ is unitary, we need not assume that $\rho(B_n)$ is finite; we can take the closure of the image, obtain a compact Lie group, and characterize the \emph{group of components} of this Lie group without assuming that the identity component is trivial.

\section{An Application}

We would like to describe a general setting in which one obtains sequences of unitary representations of the braid group of exponentially bounded degree.  Let $\CC$ be any unitary premodular =(ribbon fusion) category (see \cite[Chapter II.5]{T}).  In particular this means that $\CC$ is semisimple with finitely many (isomorphism classes of) simple objects $\{X_0,\cdots,X_r\}$ and the morphism spaces are finite dimensional $\C$-vector spaces.  Moreover, such a category is equipped with a conjugation and a positive definite Hermitian form with respect to which each $\End(X^{\ot n})$ is a Hilbert space.  The braiding isomorphisms $c_{X,Y}:X\ot Y\cong Y\ot X$ induce unitary representations $\rho_n^X: B_n\rightarrow U(\End(X^{\ot n}))$ via:

$$\rho_n^X(\sigma_i)f=\Id_X^{\ot i-1}\ot c_{X,X}\ot \Id_X^{\ot n-i-1}\circ f$$
for any object $X$, where the $B_n$-invariance of the Hermitian form is included in the axioms.  By semisimplicity of $\End(X^{\ot n})$ the spaces $\Hom(X_j,X^{\ot n})$ for simple $X_j$ are equivalent to (potentially reducible) unitary $B_n$ subrepresentations of $\End(X^{\ot n})$.

We will show that $\dim\Hom(X_j,X^{\ot n})$ is exponentially bounded.
For simplicity of notation we assume that $X=X_i$ is a simple object and each object is isomorphic to its dual; the general case is essentially the same.
For each simple object $X_i$ we define a (symmetric) matrix $N_i$ whose $(j,k)$-entry is $\dim\Hom(X_k,X_i\ot X_j)$.  The matrices $N_i$, $0\leq i\leq r$ pairwise commute, and are clearly non-negative.  Let $d_i$ be the Perron-Frobenius eigenvalue of $N_i$, \emph{i.e.} the largest eigenvalue.  Setting $D=\max\{d_i\}$ we will show that  $\dim\Hom(X_j,X^{\ot n})\leq D^n$.  First observe that $d_i\geq 1$, since $|\lambda|\leq d_i$ for all other eigenvalues $\lambda$ and clearly $(N_i)^n\neq 0$ for all $n$.
It follows from the Perron-Frobenius Theorem that the vector $\mathbf{d}=(d_0,d_1,\cdots,d_r)^T$ is a strictly positive eigenvector with eigenvalue $d_i$ for each $N_i$, uniquely determined up to rescaling (one applies the Perron-Frobenius Theorem to the strictly positive matrix $M:=\sum_i N_i$, see e.g. \cite{ENO}).  Now denoting by $\mathbf{e}_i$ the $i$th standard basis vector for $\R^r$, we see that
$\dim(X_j,X_i^{\ot n})$ is the $j$th entry of $(N_i)^{n-1}\mathbf{e}_i$ which is less than or equal to the $j$th entry of $(N_i)^{n-1}\mathbf{d}=(d_i)^{n-1}\mathbf{d}$ which in turn is bounded by $D^{n}$.

There are two well-known constructions of unitary premodular categories.  The first is $\Rep(D^\omega G)$: the representation category of the twisted quantum double of a finite group $G$.  $D^\omega G$ is a semisimple $|G|^2$-dimensional quasi-triangular quasi-Hopf algebra (see \cite{BK}), and $\Rep(D^\omega G)$ is a modular category.  The braid group representations were studied in \cite{ERW} and found to have finite images.  In particular the image of
$\rho_n^H$ where $H=D^\omega G$ is the left regular representation of $D^\omega G$ is found to be a subgroup of the full monomial group $S_n\rtimes \Z_s^n$ for some $s$ and hence of permutation type.  Since any simple object appears as a subobject of $H$, it follows that all images are of permutation type.  The second set of examples come from representations of quantum groups at roots of unity (see e.g. \cite{R}) or, equivalently, from fixed level representations of affine Kac-Moody algebras.  Quantum groups of type $A_k$ at $4$th and $6$th roots of unity yield modular categories supporting braid group representations with finite images.  In fact, these representations factor over quotients of Hecke algebras $H(q,n)$ and are precisely those alluded to in the introduction.  Quantum groups of type $C_2$ at $10$th roots of unity also yield finite braid group images \cite{J3}, with images $\Sp(n-1,\F_5)$.  Here the object $X$ of interest has $d_X=\sqrt{5}$, and for $B_n$ with $n$ odd, $\dim\End(X^{\ot n})=(\sqrt{5})^{n-1}$ and is the metaplectic representation of $\Sp(n-1,\F_5)$ with two irreducible subrepresentations of dimension $\frac{(\sqrt{5})^{n-1}\pm 1}{2}$.  It appears that this can be generalized: there is evidence that quantum groups of type $B_k$ at $(4k+2)$th roots of unity and $D_k$ at $4k$th roots of unity support braid group representations with
finite symplectic groups as images.


\begin{thebibliography}{FKLW}

\bibitem[A]{A}
	A.\ Alexeevski:
	Component groups of the centralizers of unipotent elements in semisimple algebraic groups.
	\textit{Lie groups and invariant theory},  15--31,
	Amer.\ Math.\ Soc.\ Transl.\ Ser.\ 2, 213, Amer.\ Math.\ Soc., Providence, RI, 2005.
	
\bibitem[BK]{BK}
	B.\ Bakalov and A.\ Kirillov, Jr.:
	\textit{Lectures on Tensor Categories and Modular Functors}.
	University Lecture Series, 21,  Amer.\ Math.\ Soc., 2001.

\bibitem[BW]{BW}
	J.\ Birman and B.\ Wajnryb:
	Markov classes in certain finite quotients of Artin's braid group.
	\textit{Israel J.\ Math.} \textbf{56} (1986), no.\ 2, 160--178.

\bibitem[ENO]{ENO} 
    P.\ Etingof, D.\ Nikshych, and V.\ Ostrik:
    On fusion categories.
    \textit{Ann. of Math. (2)} \textbf{162} (2005), no.\ 2, 581--642.



\bibitem[ERW]{ERW}
	P.\ Etingof, E.\ C.\ Rowell, and S.\ J.\ Witherspoon:
	Braid group representations from quantum doubles of finite groups. \emph{Pacific J.\ Math.} \textbf{234}, no.\ 1 (2008), 33--41.

\bibitem[FLW]{FLW}
	M.\ H.\ Freedman, M.\ J.\ Larsen, and Z.\ Wang:
	The two-eigenvalue problem and density of Jones representation of braid groups.
	\textit{Comm.\ Math.\ Phys.} \textbf{228} (2002),  no.\ 1, 177--199.
\bibitem[F]{F}
	E.\ Formanek:
	Braid group representations of low degree.
	\textit{ Proc.\ London Math.\ Soc. (3)} \textbf{73}  (1996),  no.\ 2, 279--322.

\bibitem[FLSV]{FLSV}
	E.\ Formanek, W.\ Lee, I.\ Sysoeva, and M.\ Vazirani:
	The irreducible complex representations of the braid group on $n$ strings of degree $\le n$.
	\textit{J.\ Algebra Appl.} \textbf{2}  (2003),  no.\ 3, 317--333.

\bibitem[GJ]{GJ}D.\ M.\ Goldschmidt and V.\ F.\ R.\ Jones:
	Metaplectic link invariants.
	\textit{Geom.\ Dedicata} \textbf{31} (1989),  no.\ 2, 165--191.

\bibitem[J1]{J1}
	V.\ F.\ R.\ Jones:
	Braid groups, Hecke algebras and type ${\rm II}\sb 1$ factors,
	\textit{Geometric methods in operator algebras (Kyoto, 1983)}, 242--273,
	Pitman Res.\ Notes Math.\ Ser., 123, Longman Sci.\ Tech., Harlow, 1986.

\bibitem[J2]{J2} V.\ F.\ R.\ Jones:
	\textit{Subfactors and Knots}.
	CBMS Regional Conference Series in Mathematics, 80.
	American Mathematical Society, Providence, RI, 1991.

\bibitem[J3]{J3} V.\ F.\ R.\ Jones:
	On a certain value of the Kauffman polynomial.
	 \textit{Comm.\ Math.\ Phys.}  \textbf{125}  (1989),  no.\ 3, 459--467.

\bibitem[LS]{LS}
	V.\ Landazuri and G.\ M.\ Seitz:
	On the minimal degrees of projective representations of the finite Chevalley groups.
	\textit{J.\ Algebra} \textbf{32}  (1974), 418--443.
	
\bibitem[LRW]{LRW}
	M.\ J.\ Larsen, E.\ C.\ Rowell, and Z.\ Wang:
	The $N$-eigenvalue problem and two applications.
	\textit{Int.\ Math.\ Res.\ Not.} \textbf{2005} (2005), no.\ 64, 3987--4018.

\bibitem[M]{M}
	K.\ Mizuno:
	The conjugate classes of unipotent elements of the Chevalley groups $E_{7}$ and $E_{8}$.
	\textit{Tokyo J.\ Math.} \textbf{3} (1980), no.\ 2, 391--461.

\bibitem[R]{R}
	E.\ C.\ Rowell:
	From quantum groups to unitary modular tensor categories.
	in \textit{Contemp.\ Math.} \textbf{413} (2006), 215--230.

\bibitem[Se1]{Se1}
	G.\ M.\ Seitz:
	The maximal subgroups of classical algebraic groups.
	\textit{Mem.\ Amer.\ Math.\ Soc.} \textbf{67}  (1987),  no.\ 365.

\bibitem[Se2]{Se2}
	G.\ M.\ Seitz:
	Maximal subgroups of exceptional algebraic groups.
	\textit{Mem.\ Amer.\ Math.\ Soc.} \textbf{90}  (1991),  no.\ 441.
	
\bibitem[St]{Steinberg}
	R.\ Steinberg:
	\textit{Endomorphisms of linear algebraic groups}.
	Memoirs of the American Mathematical Society, No.\ 80.
	
\bibitem[Sy]{Sy}
	I.\ Sysoeva:
	Dimension $n$ representations of the braid group on $n$ strings.
	\textit{J.\ Algebra} \textbf{243}  (2001),  no.\ 2, 518--538.
	
\bibitem[T]{T}
	V.\ G.\ Turaev:
	\emph{Quantum Invariants of Knots and 3-Manifolds}.
 	de Gruyter Studies in Mathematics, 18. Walter de Gruyter $\&$ Co., Berlin, 1994.

\bibitem[W]{W}
	B.\ Wajnryb:
	A braidlike presentation of ${\rm Sp}(n,p)$.
	\textit{Israel J.\ Math.}  \textbf{76}  (1991),  no.\ 3, 265--288.
\end{thebibliography}
\end{document}